\theoremstyle{definition}
\newtheorem{setting/}{Setting}[section]
\newenvironment{setting}{
	\pushQED{\qed}\begin{setting/}}
	{\popQED\end{setting/}
}
\theoremstyle{plain}
\newtheorem{theorem}[setting/]{Theorem}
\newtheorem{lemma}[setting/]{Lemma}
\providecommand{\dd}{\mathop{}\!d}
\providecommand{\defeq}{\mathrel{\vcentcolon=}}
\providecommand{\N}{\ensuremath{\mathds{N}}}
\providecommand{\R}{\ensuremath{\mathds{R}}}
\providecommand{\1}{\mathds{1}}
\providecommand{\E}{\mathbb{E}}
\providecommand{\F}{\ensuremath{\mathbb{F}}}
\providecommand{\Prob}{\ensuremath{\mathbb{P}}}
\providecommand{\MCB}{{\ensuremath{\mathcal{B}}}}
\providecommand{\MCD}{{\ensuremath{\mathcal{D}}}}
\providecommand{\MCE}{{\ensuremath{\mathcal{E}}}}
\providecommand{\MCF}{{\ensuremath{\mathcal{F}}}}
\providecommand{\MCR}{{\ensuremath{\mathcal{R}}}}
\providecommand\given{\nonscript\:\vert\nonscript\:\mathopen{}}
\DeclarePairedDelimiterXPP\Prop[1]{\Prob}(){}{
\renewcommand\given{\nonscript\:\delimsize\vert\nonscript\:\mathopen{}} #1}
\DeclarePairedDelimiterXPP\Exp[1]{\E}[]{}{
\renewcommand\given{\nonscript\:\delimsize\vert\nonscript\:\mathopen{}} #1}
\DeclarePairedDelimiter\abs{\lvert}{\rvert}
\DeclarePairedDelimiter\norm{\lVert}{\rVert}
\newcommand{\opnorm}{\@ifstar\@opnorms\@opnorm}
\newcommand{\@opnorms}[1]{%
  \left|\mkern-1.5mu\left|\mkern-1.5mu\left|
   #1
  \right|\mkern-1.5mu\right|\mkern-1.5mu\right|
}
\newcommand{\@opnorm}[2][]{%
  \mathopen{#1|\mkern-1.5mu#1|\mkern-1.5mu#1|}
  #2
  \mathclose{#1|\mkern-1.5mu#1|\mkern-1.5mu#1|}
}
\title{Differentiability of semigroups of stochastic differential equations
with Hölder-continuous diffusion coefficients}
\author{Martin Hutzenthaler$^{1}$ \& Daniel Pieper$^{2}$
\bigskip
\\
\small{Faculty of Mathematics, University of Duisburg-Essen, 45117 Essen,
Germany}\\
\small{$^1$e-mail: martin.hutzenthaler@uni-due.de, $^2$e-mail: daniel.pieper@uni-due.de}
}
\date{January 17, 2020}
\begin{document}

\maketitle

\begin{abstract}
	Differentiability of semigroups is useful for many applications.  Here we
	focus on stochastic differential equations whose diffusion coefficient is
	the square root of a differentiable function but not differentiable itself.
	For every $m\in\{0,1,2\}$ we establish an upper bound for a $C^m$-norm of
	the semigroup of such a diffusion in terms of the $C^m$-norms of the drift
	coefficient and of the squared diffusion coefficient.
  The constants in our upper bound are often dimension-independent.
  Our estimates are thus suitable for analyzing certain high-dimensional
	and infinite-dimensional degenerate stochastic differential equations.
\end{abstract}

\let\thefootnote\relax
\footnotetext{\emph{AMS 2010 subject classification:}
60J35 (Primary); 47D07, 60J60 (Secondary)}
\footnotetext{\emph{Key words and phrases:} Markov semigroups, Feller
semigroups, diffusion processes, square-root diffusions, stochastic
differential equations, $C^m$-estimate, generators, degenerate operators,
differentiability}

\section{Introduction}
Let $d\in\N$ and let $X=(X_t)_{t\in[0,\infty)}$ be the
solution of a stochastic differential equation (SDE) with values in
$[0,1]^{d}$.
We prove existence and continuity of spatial derivatives of the functions
$[0,\infty)\times [0,1]^d \ni (t,x)\mapsto (T_t f)(x)\defeq
\Exp{ f(X_t)\given X_0=x} \in \R$, $ f\in C^2([0,1]^{d},\R)$,
under suitable assumptions.
More precisely, Theorem~\ref{thm:semigroup_together} below shows under
suitable assumptions for every $t\in[0,\infty)$ and every $m\in\{0,1,2\}$ that
\begin{equation} \begin{split} \label{eq:intro_main}
		\norm{T_t f}_{C^m}\leq e^{(m^2 \lambda_m+\mu_m)t}\norm{ f}_{C^m},
	\end{split}
\end{equation}
where $\lambda_m$ and $\mu_m$ depend respectively on the partial derivatives of the drift function
and of the squared diffusion function up to order $m$ and where
$\norm{\:\cdot\:}_{C^m}$ is defined in Subsection~\ref{ss:notation} below.
In particular, note that we do not assume differentiability of the diffusion
coefficient but only of the squared diffusion coefficient. The ``cost'' of
allowing square-root diffusions is that we need to assume the diffusion
coefficient matrix to be diagonal; see
Theorem~\ref{thm:semigroup_together} for the
precise setting. We also note that even differentiability of the semigroup is
nontrivial since singular diffusion coefficients (that is, degenerate noise)
can lead to loss of regularity; see Theorem~1.2 in~\textcite{HairerHutzenthalerJentzen2015}.

Partial differentiability of semigroups is used in a number of applications,
e.g.:
\begin{itemize}
	\item inequalities between expectations of diffusions with different
		coefficient functions, e.g.\ Theorem 1 in~\textcite{CoxFleischmannGreven1996}
		or Proposition 2.2 in~\textcite{HutzenthalerWakolbinger2007},
	\item weak convergence rates for numerical approximations of SDEs,
		e.g.\ Theorem 1 in~\textcite{TalayTubaro1990},
	\item stochastic representations of quasilinear parabolic partial
		differential equations, e.g.\ Theorem 3.2 in~\textcite{Peng1991},
\end{itemize}
and many more. These results can now also be derived for those SDEs for which we
establish differentiability of the semigroup.

In the literature, differentiability of semigroups is well-known in the case
of differentiable coefficient functions of suitable order (see, e.g.,
Theorem~8.4.3 in \textcite{GikhmanSkorokhod1969}) and in the case of
one-dimensional SDEs including the case of square-root diffusion coefficients
(see, e.g., \textcite{Dorea1976} or \textcite{Ethier1978}).
Moreover, \textcite{Ethier1976} establishes differentiability of semigroups for a class
of multidimensional SDEs 
with square-root diffusion coefficient
$\{y\in[0,1]^d\colon \sum_{i=1}^d y_i\leq 1\}\ni x
\mapsto (\sqrt{x_i(1-\textstyle\sum_{j=1}^d x_j)})_{i\in\{1,\dotsc,d\}}\in\R^d$.
In addition, Lemma 4.3 in \textcite{EpsteinPop2019} establishes
differentiability of semigroups corresponding to so-called Kimura operators.
So differentiability of semigroups corresponding to degenerate SDEs
is in principle known in the literature.
However, we have not found a result on differentiability of semigroups
corresponding to the specific form of the SDE \eqref{eq:SDE.main-result} beyond the one-dimensional case.

In fact, differentiability of semigroups of degenerate SDEs is not our main concern.
Our main goal is to establish the regularity estimates \eqref{eq:intro_main}
with constants $\lambda_0,\lambda_1,\lambda_2,\mu_0,\mu_1,\mu_2$
that are dimension-independent.
This dimension-independence of regularity estimates of semigroups of degenerate stochastic
differential equations seems to be a new observation.
The benefit of such estimates with dimension-independent constants
is that it allows us to analyze infinite-dimensional (where $d=\infty$)
or high-dimensional (where $d\to \infty$) SDEs.
To mention an example application,
our main result,
Theorem~\ref{thm:semigroup_together} below,
is applied in \textcite{HutzenthalerPieper2018Conv}
to a system of interacting
diffusions on $D\in\N$ demes to obtain that the partial derivatives of the
semigroups are uniformly bounded in $D\in\N$.
This then allows to establish a many-demes limit as $D\to\infty$, that
is, to generalize Theorem 3.3 in~\textcite{Hutzenthaler2012} to a class of
SDEs with nonlinear squared
diffusion coefficients.
In addition, by approximation with finite-dimensional SDEs,
Theorem~\ref{thm:semigroup_together} can
also be applied
to McKean-Vlasov SDEs (e.g.\
(1.2) with $g(x)=x(1-x)$ in~\textcite{DawsonGreven1993}
or
(1.2) in~\textcite{Hutzenthaler2012}
or
(8) in~\textcite{HutzenthalerJordanMetzler2015}).

An important technical insight of this paper is as follows.
Results in the literature are often (e.g., \textcite{Ethier1976} or \textcite{EpsteinPop2019}
with the domain suitably replaced)
formulated in the norms 
\begin{equation}  \begin{split}\label{eq:normvvv}
	C^m([0,1]^d,\R)\ni f\mapsto\opnorm{f}_{C^m([0,1]^d,\R)}
	\defeq\sum_{\alpha \in \N_0^d, \abs{\alpha} \leq m}
	\sup_{x\in[0,1]^d}\abs{\partial^\alpha f(x)}.
\end{split}  \end{equation}
This norm, however, introduces unnecessary dimension-dependence due to the
sum in \eqref{eq:normvvv}. 
To give an illustrative example, if the drift coefficient is $[0,1]^d\ni x\mapsto x\in\R^d$,
if the diffusion coefficient is zero,
and if $f\in C^{1}(\R,\R)$,
then the solution of the SDE \eqref{eq:SDE.main-result} is
$(x_i e^{t})_{t\in[0,\infty), i\in\{1,\ldots,d\}}$ and it holds for all $t\in[0,\infty)$
that
\begin{equation}  \begin{split}
	\opnorm[\bigg]{ [0,1]^d\ni x\mapsto f\Bigl(\sum_{i=1}^d x_ie^t\Bigr)\in\R }_{C^1([0,1]^d,\R)}
	&=\sup_{x\in[0,1]^d}\abs[\Big]{f\Bigl(\sum_{i=1}^d x_ie^t\Bigr)}
	+\sum_{k=1}^d \sup_{x\in[0,1]^d}\abs[\Big]{f'\Bigl(\sum_{i=1}^d
	x_ie^t\Bigr)e^t}
	\\&
	\leq\sup_{z\in\R}\abs{f(z)}+d\sup_{z\in\R}\abs{f'(z)}e^t.
\end{split}  \end{equation}
If the norm $\opnorm{\:\cdot\:}_{C^1([0,1]^d,\R)}$ is replaced
by our norm $\norm{\:\cdot\:}_{C^1}$ where the sum in \eqref{eq:normvvv} is replaced
by the maximum, then the dimension $d$ does not appear on the right-hand side.


\subsection{Notation} \label{ss:notation}
We write $\N_0 \defeq \{0,1,2,\ldots\}$ and $\N \defeq \N_0\setminus\{0\}$.
For every topological space $(E,\MCE)$ we denote by $\MCB(E)$ the Borel
$\sigma$-algebra on $(E,\MCE)$.
For every $d \in \N$ and every $m\in\N_0$ we denote by $C^m([0,1]^d,\R)$ the
set of functions $ f\colon [0,1]^d \to \R$ whose partial derivatives of
order $0$ through $m$ exist and are continuous on $[0,1]^d$.
For every
$d\in\N$ and every $ f\colon [0,1]^d \to \R$ we define $\norm{ f}_\infty
\defeq \sup_{x\in [0,1]^d}\abs{ f(x)} \in [0,\infty]$. For every $d\in \N$
and every multiindex $\alpha = (\alpha_1,\dotsc,\alpha_d) \in \N_0^d$ of
length $\abs{\alpha} \defeq \sum_{k=1}^d \alpha_k$ we write $\partial^\alpha
\defeq \frac{\partial^{\abs \alpha}}{\partial x_1^{\alpha_1}\dotsm\partial
x_d^{\alpha_d}}$. For every $d\in\N$, every $m\in\N_0$, and every $ f \in
C^m([0,1]^d,\R)$ we define $\norm{ f}_{C^m} \defeq \max_{\alpha \in \N_0^d,
\abs{\alpha} \leq m}\norm{\partial^\alpha f}_\infty$. For every $d\in\N$,
every $x = (x_k)_{k\in\{1,\dotsc,d\}} \in [0,1]^d$, and every
$i,j\in\{1,\dotsc,d\}$ we write $\hat x_i \defeq
(x_k)_{k\in\{1,\dotsc,d\}\setminus\{i\}}$ and $\hat x_{ij} \defeq
(x_k)_{k\in\{1,\dotsc,d\}\setminus\{i,j\}}$.

\section{Drift part} \label{sec:drift_term}
In this section, we prove \eqref{eq:intro_main} for $m\in\{0,1,2\}$ and an
analogous result for the $\norm{\:\cdot\:}_{C^3}$-norm under suitable
assumptions in the case where the diffusion coefficient is zero.
The case of non-zero diffusion coefficient is
analyzed in Section \ref{sec:diffusion_term}.
The following
Setting~\ref{set:semigroup_drift} establishes the precise setting assumed
throughout this section.
\begin{setting}[Drift coefficients]\label{set:semigroup_drift}
	Let $d\in\N$, let $b_1,\dotsc,b_d \in C^3([0,1]^d,\R)$ satisfy for all
	$i\in\{1,\dotsc,d\}$ and all $x = (x_1,\dotsc,x_d) \in [0,1]^d$ with $x_i \in
	\{0,1\}$ that $(-1)^{x_i} b_i(x) \geq 0$,
	and for every $m\in\{1,2,3\}$
	we define
	$\lambda_m \defeq \max_{\alpha \in \N_0^d, 0 < \abs{\alpha} \leq
	m}\sum_{i=1}^d \norm{\partial^\alpha b_i}_\infty$.

	Theorem~3.2
	in~\textcite{ShigaShimizu1980} ensures the existence of a deterministic Markov
	process
		$y = (y_1,\dotsc,y_d) \colon [0,\infty) \times [0,1]^d \to [0,1]^d$
	satisfying for all $i\in\{1,\dotsc,d\}$, all $t\in[0,\infty)$, and all $x =
	(x_1,\dotsc,x_d) \in[0,1]^d$ that
	\begin{equation}
		y_i(t,x) = x_i + \int_0^t b_i(y(s,x)) \dd s.
		\label{eq:drift_ODE_integral}
	\end{equation}
	We denote by $\{T^1_t \colon t\in [0,\infty)\}$ the associated
	strongly continuous contraction semigroup on $C([0,1]^d,\R)$, which
	satisfies for all $t\in[0,\infty)$, all $ f \in C([0,1]^d,\R)$, and all
	$x\in [0,1]^d$ that $(T^1_t f)(x) = (f\circ y)(t,x)$.
\end{setting}
\begin{lemma}[$C^1$-esimate for drift part] \label{l:semigroup_drift_C1}
	Assume \textup{Setting~\ref{set:semigroup_drift}} and let $ f \in
	C^1([0,1]^d,\R)$.
	Then it holds for all $t\in[0,\infty)$ that $T^1_t f \in C^1([0,1]^d,\R)$
	and
	\begin{equation}
		\norm{T^1_t f}_{C^1} \leq e^{\lambda_1 t}\norm{ f}_{C^1}.
	\end{equation}
\end{lemma}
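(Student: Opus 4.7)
My plan is to reduce the estimate to the variational equation of the flow $y$. Since $T^1_t f = f\circ y(t,\cdot)$, the chain rule converts bounds on $\norm{T^1_t f}_{C^1}$ into bounds on the Jacobian of $y(t,\cdot)$; I will then handle the Jacobian by a Grönwall argument in the $\ell^1$-norm on $\R^d$, with the key point being that the definition of $\lambda_1$ is exactly the $\ell^1\to\ell^1$ operator norm of the Jacobian of $b$ (its maximum absolute column sum). To justify $C^1$-regularity of the flow on the closed cube $[0,1]^d$ I first extend each $b_i$ to a function in $C^3(\R^d,\R)$, invoke classical $C^1$-dependence of ODE solutions on initial conditions for the extended system, and identify the restriction of the extended flow to $[0,1]^d$ with $y$ via the uniqueness already implicit in \eqref{eq:drift_ODE_integral}.

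Now fix $x\in[0,1]^d$ and $k\in\{1,\ldots,d\}$ and set $v_j(t)\defeq \partial_{x_k}y_j(t,x)$ for $j\in\{1,\ldots,d\}$. Differentiating \eqref{eq:drift_ODE_integral} in $x_k$ yields the linear variational system
\begin{equation}
	\frac{d}{dt}v_j(t)=\sum_{l=1}^d(\partial_l b_j)(y(t,x))\,v_l(t),
	\qquad v_j(0)=\1_{\{j=k\}}.
\end{equation}
Summing absolute values over $j$ and bounding column-wise gives the differential inequality
\begin{equation}
	\frac{d}{dt}\sum_{j=1}^d\abs{v_j(t)}
	\leq \sum_{l=1}^d\abs{v_l(t)}\sum_{j=1}^d\norm{\partial_l b_j}_\infty
	\leq \lambda_1\sum_{j=1}^d\abs{v_j(t)},
\end{equation}
so Grönwall yields $\sum_{j=1}^d\abs{v_j(t)}\leq e^{\lambda_1 t}$.

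Applying the chain rule to $T^1_t f=f\circ y(t,\cdot)$ and inserting the previous bound then gives, for every $k\in\{1,\ldots,d\}$ and $x\in[0,1]^d$,
\begin{equation}
	\abs{\partial_{x_k}(T^1_t f)(x)}
	= \biggl|\sum_{j=1}^d(\partial_j f)(y(t,x))\,v_j(t)\biggr|
	\leq \max_{j\in\{1,\ldots,d\}}\norm{\partial_j f}_\infty\cdot e^{\lambda_1 t}
	\leq e^{\lambda_1 t}\norm{f}_{C^1}.
\end{equation}
Combined with the contraction estimate $\norm{T^1_t f}_\infty\leq\norm{f}_\infty\leq\norm{f}_{C^1}$ and the fact that $e^{\lambda_1 t}\geq 1$, taking the maximum over $\abs{\alpha}\leq 1$ in the definition of $\norm{\,\cdot\,}_{C^1}$ will conclude the argument.

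The only genuinely delicate step will be the $C^1$-regularity of the flow at boundary points of $[0,1]^d$, which is why I extend $b_i$ to $\R^d$ and invoke classical ODE smoothness on the extension; the sign conditions on $b_i$ at the boundary play no role in the regularity argument, only in ensuring that the unextended flow remains inside $[0,1]^d$. The rest is the standard variational-equation-plus-Grönwall scheme, and the one conceptual point worth emphasizing — which is evidently the reason for the paper's specific choice of norm — is that the matching between $\lambda_1$ and the $\ell^1\to\ell^1$ operator norm keeps the constant dimension-independent; replacing the outer $\max$ by a sum (as would be forced by the norm \eqref{eq:normvvv}) would introduce an extra factor of $d$.
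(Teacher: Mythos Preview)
Your proposal is correct and follows essentially the same route as the paper: differentiate the flow to obtain the variational equation, control the Jacobian columns in the $\ell^1$-norm via Gronwall (yielding $\sum_j\abs{\partial_{x_k}y_j(t,x)}\le e^{\lambda_1 t}$), and then apply the chain rule together with the contraction bound on $\norm{T^1_t f}_\infty$. The only cosmetic differences are that the paper cites Hartman directly for $C^1$-regularity of the flow on $[0,1]^d$ rather than extending $b$ to $\R^d$, and that it works with the integral form of the variational equation before invoking Gronwall, which sidesteps the minor issue of differentiating $t\mapsto\sum_j\abs{v_j(t)}$ at zeros of the $v_j$.
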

\begin{proof}
	The theory of ordinary differential equations yields for all
	$t\in[0,\infty)$ that $y(t,\:\cdot\:) \in C^1([0,1]^d,[0,1]^d)$
	(see, e.g.\ Corollary~V.4.1 in~\textcite{Hartman2002})
	and this together with $ f \in C^1([0,1]^d,\R)$ implies that $T^1_t f
	\in C^1([0,1]^d,\R)$.
	The dominated convergence theorem and~\eqref{eq:drift_ODE_integral} imply
	for all $i, j \in\{1,\dotsc,d\}$, all $t\in[0,\infty)$, and all $x\in
	[0,1]^d$ that
	\begin{equation}
		\frac{\partial y_i}{\partial x_{j}}(t,x) = \1_{i = j} +
		\int_0^t \sum_{k=1}^d \frac{\partial b_i}{\partial
		y_k}(y(s,x))\frac{\partial y_k}{\partial x_{j}}(s,x) \dd s.
	\end{equation}
	It follows for all $j \in\{1,\dotsc,d\}$, all $t\in[0,\infty)$, and all
	$x\in [0,1]^d$ that
	\begin{equation}
		\begin{split}
			\sum_{i=1}^d\abs[\bigg]{\frac{\partial y_i}{\partial x_{j}}(t,x)} 
			&\leq 1 + \int_0^t
			\sum_{k=1}^d
			\left(\sum_{i=1}^d \abs[\bigg]{\frac{\partial b_i}{\partial
			y_k}(y(s,x))}\right)\abs[\bigg]{\frac{\partial y_k}{\partial
			x_{j}}(s,x)} \dd s \\
			&\leq 1 + \int_0^t
			\left(\max_{\alpha \in \N^d_0, \abs{\alpha} = 1}\sum_{i=1}^d
			\norm*{\partial^\alpha
			b_i}_\infty\right)\left(\sum_{k=1}^d\abs[\bigg]{\frac{\partial
			y_k}{\partial x_{j}}(s,x)}\right) \dd s\\
			&= 1 + \int_0^t
			\lambda_1\sum_{k=1}^d\abs[\bigg]{\frac{\partial y_k}{\partial
			x_{j}}(s,x)} \dd s.
		\end{split}
	\end{equation}
	This and Gronwall's inequality yield for all $j
	\in\{1,\dotsc,d\}$, all $t\in[0,\infty)$, and all $x\in [0,1]^d$ that
	\begin{equation}
		\sum_{i=1}^d\abs[\bigg]{\frac{\partial y_i}{\partial x_{j}}(t,x)}
		\leq
		e^{\lambda_1 t}.
		\label{eq:drift_C1_est2}
	\end{equation}
	It follows from the chain rule and from~\eqref{eq:drift_C1_est2} for
	all $j \in\{1,\dotsc,d\}$, all $t\in[0,\infty)$, and all $x\in [0,1]^d$ that
	\begin{equation}
		\begin{split}
			\abs[\bigg]{\frac{\partial (f\circ y)}{\partial x_{j}}(t,x)} =
			\abs[\Bigg]{\sum_{i=1}^d \frac{\partial  f}{\partial y_i}(y(t,x))
			\frac{\partial y_i}{\partial x_{j}}(t,x)}
			\leq \norm{ f}_{C^1}\sum_{i=1}^d\abs[\bigg]{\frac{\partial
			y_i}{\partial x_{j}}
			(t,x)} \leq e^{\lambda_1 t}\norm{ f}_{C^1}.
		\end{split}
	\end{equation}
	Together with the fact that $\sup_{t\in[0,\infty)}\norm{T^1_t f}_\infty
	\leq \norm{ f}_\infty$, this implies for all $t\in[0,\infty)$ that
	\begin{equation}
		\norm{T^1_t f}_{C^1} = \max\left\{\norm{T^1_t f}_\infty,
		\adjustlimits\max_{j
			\in \{1,\dotsc,d\}} \sup_{x\in[0,1]^d} \abs[\bigg]{\frac{\partial
			(f\circ y)}{\partial x_{j}}(t,x)}\right\} \leq e^{\lambda_1 t}\norm{ f}_{C^1}.
	\end{equation}
	This finishes the proof of Lemma~\ref{l:semigroup_drift_C1}.
\end{proof}

\begin{lemma}[$C^2$-estimate for drift part] \label{l:semigroup_drift_C2}
	Assume \textup{Setting~\ref{set:semigroup_drift}} and let $ f \in
	C^2([0,1]^d,\R)$.
	Then it holds for all $t\in[0,\infty)$ that
	$T^1_t f \in C^2([0,1]^d,\R)$ and
	\begin{equation}
		\norm{T^1_t f}_{C^2} \leq e^{4 \lambda_2 t}\norm{ f}_{C^2}.
	\end{equation}
\end{lemma}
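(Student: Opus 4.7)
The plan is to mirror the proof of Lemma~\ref{l:semigroup_drift_C1} while tracking second-order sensitivities of the flow $y$. Since $b_i\in C^3$, Corollary~V.4.1 in~\textcite{Hartman2002} yields $y(t,\cdot)\in C^2([0,1]^d,[0,1]^d)$, so $T^1_t f=f\circ y(t,\cdot)\in C^2([0,1]^d,\R)$. I would then differentiate~\eqref{eq:drift_ODE_integral} twice under the integral sign via dominated convergence to obtain, for all $i,j,k\in\{1,\dotsc,d\}$, $t\in[0,\infty)$, and $x\in[0,1]^d$,
\begin{equation*}
  \frac{\partial^2 y_i}{\partial x_j\partial x_k}(t,x)
  = \int_0^t\Bigl[\sum_{l,m=1}^d\frac{\partial^2 b_i}{\partial y_l\partial y_m}(y(s,x))\frac{\partial y_l}{\partial x_j}(s,x)\frac{\partial y_m}{\partial x_k}(s,x)+\sum_{l=1}^d\frac{\partial b_i}{\partial y_l}(y(s,x))\frac{\partial^2 y_l}{\partial x_j\partial x_k}(s,x)\Bigr]\dd s,
\end{equation*}
where the boundary term at $s=0$ vanishes because $y_i(0,x)=x_i$.

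Next, for fixed $x$ and $j,k$ I would set $A_j(t)\defeq\sum_{i=1}^d\abs{\frac{\partial y_i}{\partial x_j}(t,x)}$ and $B_{jk}(t)\defeq\sum_{i=1}^d\abs{\frac{\partial^2 y_i}{\partial x_j\partial x_k}(t,x)}$, recall $A_j(t)\leq e^{\lambda_1 t}$ from~\eqref{eq:drift_C1_est2}, take absolute values in the display, sum over $i$, and use the bound $\sum_{i=1}^d\norm{\partial^\alpha b_i}_\infty\leq\lambda_{\abs\alpha}$ for $\abs\alpha\in\{1,2\}$ to deduce
\begin{equation*}
  B_{jk}(t)\leq\lambda_2\int_0^t A_j(s)A_k(s)\dd s+\lambda_1\int_0^t B_{jk}(s)\dd s.
\end{equation*}
The key step is then to introduce $R_{jk}(t)\defeq A_j(t)A_k(t)+B_{jk}(t)$: using $A_j(s)A_k(s)\leq e^{2\lambda_2 s}$ and $\lambda_1\leq\lambda_2$ one gets $R_{jk}(t)\leq e^{2\lambda_2 t}+\lambda_2\int_0^t R_{jk}(s)\dd s$, and the standard integral Gronwall lemma (using that $s\mapsto e^{2\lambda_2 s}$ is non-decreasing) yields $R_{jk}(t)\leq e^{3\lambda_2 t}\leq e^{4\lambda_2 t}$.

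To conclude, the chain rule gives
\begin{equation*}
  \frac{\partial^2(f\circ y)}{\partial x_j\partial x_k}(t,x)
  =\sum_{i,l=1}^d\frac{\partial^2 f}{\partial y_i\partial y_l}(y(t,x))\frac{\partial y_i}{\partial x_j}(t,x)\frac{\partial y_l}{\partial x_k}(t,x)+\sum_{i=1}^d\frac{\partial f}{\partial y_i}(y(t,x))\frac{\partial^2 y_i}{\partial x_j\partial x_k}(t,x),
\end{equation*}
so $\abs{\frac{\partial^2(T^1_t f)}{\partial x_j\partial x_k}(x)}\leq\norm{f}_{C^2}R_{jk}(t)\leq e^{4\lambda_2 t}\norm{f}_{C^2}$. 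Combined with the trivial bound $\norm{T^1_t f}_\infty\leq\norm{f}_\infty\leq\norm{f}_{C^2}$ and with Lemma~\ref{l:semigroup_drift_C1} (noting $\lambda_1\leq\lambda_2$), this yields the required estimate on $\norm{T^1_t f}_{C^2}$.

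The main obstacle I expect is choosing the Gronwall argument: inserting $A_j(s)A_k(s)\leq e^{2\lambda_1 s}$ into the inequality for $B_{jk}$ and then applying Gronwall directly to $B_{jk}$ gives a bound mixing $\lambda_1$ and $\lambda_2$ awkwardly (with a $\lambda_2/\lambda_1$ factor from integrating $e^{2\lambda_1 s}$). Absorbing the inhomogeneous term into the single variable $R_{jk}=A_jA_k+B_{jk}$ is what collapses the coupled estimates into one self-referential inequality and yields the clean exponent $4\lambda_2$ claimed in the lemma.
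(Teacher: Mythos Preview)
Your proposal is correct and follows essentially the same route as the paper: ODE regularity for the flow, differentiating \eqref{eq:drift_ODE_integral} twice, bounding the sums over $i$ by $\lambda_1,\lambda_2$, a Gronwall step, and then the chain rule. The only cosmetic difference is that the paper applies Gronwall directly to $B_{jk}$ (obtaining $B_{jk}(t)\le\tfrac12(e^{2\lambda_2 t}-1)e^{\lambda_2 t}$) and adds $A_jA_k\le e^{2\lambda_2 t}$ afterward, whereas you bundle them into $R_{jk}=A_jA_k+B_{jk}$ first; both arrive at the bound $e^{4\lambda_2 t}$ (and indeed both already give $e^{3\lambda_2 t}$).
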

\begin{proof}
	The theory of ordinary differential equations yields for all
	$t\in[0,\infty)$ that $y(t,\:\cdot\:) \in C^2([0,1]^d,[0,1]^d)$
	(see, e.g.\ Corollary~V.4.1 in~\textcite{Hartman2002})
	and this together with $ f \in C^2([0,1]^d,\R)$ implies that $T^1_t f
	\in C^2([0,1]^d,\R)$.
	The dominated convergence theorem and~\eqref{eq:drift_ODE_integral} imply
	for all $i, j, k \in \{1,\dotsc,d\}$, all $t\in[0,\infty)$,
	and all $x\in [0,1]^d$ that
	\begin{equation}
		\begin{split}
			\frac{\partial^2 y_i}{\partial x_{k}\partial x_{j}}(t,x) &=
			\int_0^t \sum_{l,m=1}^d \frac{\partial^2 b_i}{\partial y_m \partial
			y_l}(y(s,x))\frac{\partial y_m}{\partial
			x_{k}}(s,x) \frac{\partial y_l}{\partial
			x_{j}}(s,x)
			+ \sum_{l=1}^d \frac{\partial b_i}{\partial y_l}(y(s,x))
			\frac{\partial^2 y_l}{\partial x_{k} \partial x_{j}}(s,x) \dd s.
		\end{split}
	\end{equation}
	This,~\eqref{eq:drift_C1_est2}, and $\lambda_1 \leq \lambda_2$ imply for all $j,
	k \in \{1,\dotsc,d\}$, all $t\in[0,\infty)$, and all $x\in [0,1]^d$
	that
	\begin{equation}
		\begin{split}
			\sum_{i=1}^d \abs[\bigg]{\frac{\partial^2 y_i}{\partial x_{k}\partial x_{j}}
			(t,x)}
			&\leq \int_0^t \sum_{l,m=1}^d \left(\sum_{i=1}^d
			\abs[\bigg]{\frac{\partial^2 b_i}{\partial y_m \partial y_l}(y(s,x))}\right)
			\abs[\bigg]{\frac{\partial y_m}{\partial x_{k}}(s,x)}
			\abs[\bigg]{\frac{\partial y_l}{\partial x_{j}}(s,x)} \\
			&\quad+ \sum_{l=1}^d \left(\sum_{i=1}^d \abs[\bigg]{\frac{\partial
			b_i}{\partial y_l}(y(s,x))}\right)
			\abs[\bigg]{\frac{\partial^2 y_l}{\partial x_{k} \partial x_{j}}(s,x)} \dd s\\
			&\leq \int_0^t \left(\max_{\alpha \in \N_0^d, \abs{\alpha} = 2}\sum_{i=1}^d
			\norm*{\partial^\alpha
			b_i}_\infty\right) \left(\sum_{m=1}^d \abs[\bigg]{\frac{\partial y_m}{\partial
			x_{k}}(s,x)}\right)
			\left(\sum_{l=1}^d \abs[\bigg]{\frac{\partial y_l}{\partial x_{j}}(s,x)}\right) \\
			&\quad+ \left(\max_{\alpha \in \N_0^d,\abs{\alpha} = 1}\sum_{i=1}^d
			\norm*{\partial^\alpha b_i}_\infty\right)
			\left(\sum_{l=1}^d\abs[\bigg]{\frac{\partial^2 y_l}{\partial
			x_{k} \partial x_{j}}(s,x)}\right) \dd s\\
			&\leq \int_0^t \lambda_2 e^{2 \lambda_2 s}
			+ \lambda_2
			\sum_{l=1}^d\abs[\bigg]{\frac{\partial^2 y_l}{\partial x_{k} \partial
			x_{j}}(s,x)} \dd s\\
			& = \tfrac{1}{2}(e^{2\lambda_2t} - 1) + \int_0^t
			\lambda_2\sum_{l=1}^d\abs[\bigg]{\frac{\partial^2 y_l}{\partial x_{k} \partial
			x_{j}}(s,x)} \dd s.
		\end{split}
	\end{equation}
	This and Gronwall's inequality yield for all $j, k \in \{1,\dotsc,d\}$,
	all $t\in[0,\infty)$, and all $x\in [0,1]^d$ that
	\begin{equation}
		\begin{split}
			\sum_{i=1}^d \abs[\bigg]{\frac{\partial^2 y_i}{\partial x_{k}\partial x_{j}}
			(t,x)}
			& \leq \tfrac{1}{2}(e^{2\lambda_2t} - 1)e^{\lambda_2 t}.
		\end{split}
		\label{eq:drift_C2_est2}
	\end{equation}
	It follows from the chain rule, \eqref{eq:drift_C1_est2}, $\lambda_1 \leq \lambda_2$,
	and from~\eqref{eq:drift_C2_est2} for all $j, k
	\in\{1,\dotsc,d\}$, all $t\in[0,\infty)$, and all $x\in [0,1]^d$ that
	\begin{equation}
		\begin{split}
			\abs[\bigg]{\frac{\partial^2 ( f \circ y)}{\partial x_{k}\partial x_{j}}(t,x)} &\leq
			\abs[\Bigg]{\sum_{i,l=1}^d \frac{\partial^2  f}{\partial y_l \partial
			y_i}(y(t,x))\frac{\partial y_l}{\partial x_{k}}(t,x)\frac{\partial
			y_i}{\partial x_{j}}(t,x)}
			+\abs[\Bigg]{\sum_{i=1}^d \frac{\partial  f}{\partial
			y_i}(y(t,x))\frac{\partial^2 y_i}{\partial
			x_{k} \partial x_{j}}(t,x)}\\
			&\leq \norm{ f}_{C^2}\Biggl( \sum_{l=1}^d
			\abs[\bigg]{\frac{\partial y_l}{\partial x_{k}}(t,x)}
			\Biggr)\Biggl( \sum_{i=1}^d \abs[\bigg]{\frac{\partial y_i}{\partial
			x_{j}}(t,x)} \Biggr)
			+ \norm{ f}_{C^2} \sum_{i=1}^d
			\abs[\bigg]{\frac{\partial^2 y_i}{\partial x_{k} \partial x_{j}}(t,x)}\\
			&\leq \left(e^{2 \lambda_2 t} + \tfrac{1}{2}(e^{2 \lambda_2 t}-1)e^{\lambda_2 t}\right)
			\norm{ f}_{C^2} \\
			&\leq \left(e^{2 \lambda_2 t} + (e^{2 \lambda_2 t} - 1)e^{2\lambda_2
			t}\right)\norm{ f}_{C^2} = e^{4 \lambda_2 t}\norm{ f}_{C^2}.
		\end{split}
	\end{equation}
	Together with Lemma~\ref{l:semigroup_drift_C1} and $\lambda_1\leq \lambda_2$, this shows for all
	$t\in[0,\infty)$ that
	\begin{equation}
		\norm{T^1_t f}_{C^2} = \max\left\{\norm{T^1_t f}_{C^1},
		\adjustlimits\max_{j, k
		\in\{1,\dotsc,d\}}\sup_{x\in[0,1]^d}\abs[\bigg]{\frac{\partial^2( f
		\circ y)}{\partial
		x_{k}\partial
		x_{j}}(t,x)}\right\}\leq e^{4\lambda_2 t}
		\norm{ f}_{C^2}.
	\end{equation}
	This finishes the proof of Lemma~\ref{l:semigroup_drift_C2}.
\end{proof}

The proof of the following Lemma~\ref{l:semigroup_drift_C3} is analogous to
the proof of Lemma~\ref{l:semigroup_drift_C2} and therefore omitted here.
\begin{lemma}[$C^3$-estimate for drift part] \label{l:semigroup_drift_C3}
	Assume \textup{Setting~\ref{set:semigroup_drift}} and let $ f \in
	C^3([0,1]^d,\R)$.
	Then it holds for all $t\in[0,\infty)$ that
	$T^1_t f \in C^3([0,1]^d,\R)$ and
	\begin{equation}
		\norm{T^1_t f}_{C^3} \leq e^{13 \lambda_3 t}\norm{ f}_{C^3}.
	\end{equation}
\end{lemma}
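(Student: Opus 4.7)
The plan is to follow the same three-stage template as Lemma~\ref{l:semigroup_drift_C2} one order higher. First I would invoke Corollary~V.4.1 in~\textcite{Hartman2002} to obtain $y(t,\:\cdot\:)\in C^3([0,1]^d,[0,1]^d)$ for every $t\in[0,\infty)$, which together with $f\in C^3([0,1]^d,\R)$ yields $T^1_t f\in C^3([0,1]^d,\R)$. Then, by the dominated convergence theorem, differentiating \eqref{eq:drift_ODE_integral} three times in $x$ produces an integral equation for $\frac{\partial^3 y_i}{\partial x_l\partial x_k\partial x_j}$ whose integrand is the third-order chain rule applied to $b_i\circ y$: it consists of a term of the form $\partial^3 b_i\cdot\partial y\cdot\partial y\cdot\partial y$, three permutations of $\partial^2 b_i\cdot\partial y\cdot\partial^2 y$, and $\partial b_i\cdot\partial^3 y$.

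Second, I would sum the absolute values over $i\in\{1,\dotsc,d\}$. In each summand the sum $\sum_{i=1}^d\abs{\partial^\alpha b_i(y(s,x))}$ is dominated by $\lambda_3$ (using $\lambda_1,\lambda_2\leq\lambda_3$), while the products of first- and second-order derivatives of $y$ are controlled by \eqref{eq:drift_C1_est2} and \eqref{eq:drift_C2_est2} (again with $\lambda_1,\lambda_2$ replaced by the larger $\lambda_3$). This yields an integral inequality of the form
\begin{equation}
\sum_{i=1}^d\abs[\bigg]{\frac{\partial^3 y_i}{\partial x_l\partial x_k\partial x_j}(t,x)}\leq g(t)+\int_0^t\lambda_3\sum_{p=1}^d\abs[\bigg]{\frac{\partial^3 y_p}{\partial x_l\partial x_k\partial x_j}(s,x)}\dd s,
\end{equation}
with $g$ an explicit linear combination of exponentials $e^{k\lambda_3 t}$ for $k\in\{1,2,3,4\}$. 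Gronwall's inequality then gives a closed exponential bound on $\sum_{i=1}^d\abs{\partial^3 y_i/(\partial x_l\partial x_k\partial x_j)}$.

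Finally, applying Faà di Bruno's chain rule to $f\circ y$ expresses $\partial^3(f\circ y)/(\partial x_l\partial x_k\partial x_j)$ as a sum of products of derivatives of $f$ of order at most $3$ with products of derivatives of $y$ of total differentiation order $3$. Bounding each product by $\norm{f}_{C^3}$ times the appropriate combination of \eqref{eq:drift_C1_est2}, \eqref{eq:drift_C2_est2} and the new third-order estimate, and combining with Lemmas~\ref{l:semigroup_drift_C1} and~\ref{l:semigroup_drift_C2} through
\begin{equation}
\norm{T^1_t f}_{C^3}=\max\Bigl\{\norm{T^1_t f}_{C^2},\adjustlimits\max_{j,k,l\in\{1,\dotsc,d\}}\sup_{x\in[0,1]^d}\abs[\bigg]{\tfrac{\partial^3(f\circ y)}{\partial x_l\partial x_k\partial x_j}(t,x)}\Bigr\},
\end{equation}
delivers the claim. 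The main obstacle is purely combinatorial: one must track the multinomial coefficients arising from the iterated chain rule and overestimate the resulting mixture of exponentials $e^{k\lambda_3 t}$ with $k\in\{1,2,3,4\}$ by a single term, which turns out to collapse to $e^{13\lambda_3 t}$ (consistent with the pattern $1,4,13$ already visible in Lemmas~\ref{l:semigroup_drift_C1}--\ref{l:semigroup_drift_C2}). No fundamentally new idea beyond Lemma~\ref{l:semigroup_drift_C2} is required.
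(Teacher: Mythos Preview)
Your proposal is correct and takes precisely the approach the paper intends: the paper omits the proof entirely, stating only that it ``is analogous to the proof of Lemma~\ref{l:semigroup_drift_C2},'' and your three-stage outline (regularity of the flow via Hartman, Gronwall on $\sum_i\abs{\partial^3 y_i/\partial x_l\partial x_k\partial x_j}$, then Faà di Bruno for $f\circ y$) is exactly that analogy spelled out. One minor bookkeeping remark: after Gronwall the bound on the third flow derivative picks up an extra factor $e^{\lambda_3 t}$, so the final mixture of exponentials in $\abs{\partial^3(f\circ y)}$ ranges up to $e^{5\lambda_3 t}$ rather than $e^{4\lambda_3 t}$; this does not affect the argument, since the telescoping overestimate still collapses comfortably to $e^{13\lambda_3 t}$.
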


\section{Diffusion part} \label{sec:diffusion_term}
The goal of this section is to prove \eqref{eq:intro_main} for
$m\in\{0,1,2,3\}$ under suitable assumptions in the case where the drift
coefficient is zero; see Lemma~\ref{l:semigroup_diffusion} below. For that, we
first look at the one-dimensional case in Subsection~\ref{ss:diffusion_1_dim}
below, and then we lift this result to the multidimensional case in
Subsection~\ref{ss:diffusion_d_dim} below.

\subsection{One-dimensional case} \label{ss:diffusion_1_dim}
The following lemma on smoothness preservation of the semigroup is well-known
if, for $m\in \{0,1,2,3\}$, the norm $\norm{\:\cdot\:}_{C^m}$ is replaced by the
equivalent norm $\varphi \mapsto \sum_{k=0}^m \norm{\frac{d^k
\varphi}{dx^k}}_\infty$; see~\textcite{Dorea1976}. The proof of the new upper
bound of the operator norm of the semigroup with respect to
$\norm{\:\cdot\:}_{C^m}$ for $m\in\{0,1,2,3\}$ is a straightforward adaptation of
the proofs in~\textcite{Dorea1976}.
\begin{lemma}[Smoothness preservation of one-dimensional diffusive part] \label{l:semigroup_1d}
	Let $a\in C^3([0,1],\R)$ satisfy that $a(0) = 0 = a(1)$ and for all
	$x\in(0,1)$ that $a(x) > 0$, let $A \colon C^2([0,1],\R) \to C([0,1],\R)$
	satisfy for all $\varphi \in C^2([0,1],\R)$ and all $x\in [0,1]$ that
	\begin{equation}
		(A\varphi)(x) = \frac{1}{2} a(x)\frac{d^2 \varphi}{dx^2}(x), 
	\end{equation}
	for all $m\in\N_0$ we define $\MCD_m(A) \defeq C^2([0,1],\R) \cap C^m([0,1],\R)
	\cap A^{-1}C^m([0,1],\R)$, 
	we define $\nu_0 \defeq 0$, $\nu_1 \defeq 0$, $\nu_2 \defeq \frac{1}{2} \norm{\frac{d^2
	a}{dx^2}}_\infty$, and $\nu_3 \defeq \norm{\frac{d^3
	a}{dx^3}}_\infty + \frac{3}{2}\norm{\frac{d^2 a}{dx^2}}_\infty$,
	and we denote by $\{S_t\colon t\in[0,\infty)\}$ the
	strongly continuous contraction semigroup on $C([0,1],\R)$ generated by
	$(A,\MCD_0(A))$; see \textup{Theorem~1} on \textup{p.~38}
	in~\textup{\textcite{Mandl1968}}.
	Then it holds for all
	$m\in\{0,1,2,3\}$ that
	\begin{enumerate}[label=\textup{(\roman*)}]
		\item it holds for all $t\in[0,\infty)$ that $S_t\colon C^m([0,1],\R) \to
			C^m([0,1],\R)$,
		\item $\{S_t \colon t\in[0,\infty)\}$ defines a strongly
			continuous semigroup on $C^m([0,1],\R)$ with generator $(A,\MCD_m(A))$,
			and
		\item it holds for all $t \in [0,\infty)$ and all $\varphi\in C^m([0,1],\R)$ that
			\begin{equation}
				\norm*{S_t\varphi}_{C^m} \leq e^{\nu_m t}\norm{\varphi}_{C^m}.
				\label{eq:estimate_semigroup}
			\end{equation}
	\end{enumerate}
\end{lemma}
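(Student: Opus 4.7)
The plan is to adapt the strategy of~\textcite{Dorea1976}: assuming $\varphi$ is sufficiently smooth, differentiate the backward equation $\partial_t(S_t\varphi)=A(S_t\varphi)=\frac{1}{2}a\,\partial_x^2(S_t\varphi)$ exactly $k$ times in $x$ and control the sup-norm of $v_k(t,x)\defeq\partial_x^k(S_t\varphi)(x)$ by a parabolic maximum principle. The Leibniz rule gives
\begin{equation*}
	\partial_t v_k=\frac{1}{2}a\,v_k''+\frac{k}{2}a'\,v_k'+\binom{k}{2}\frac{a''}{2}\,v_k+\frac{1}{2}\sum_{j=3}^{k}\binom{k}{j}a^{(j)}\,v_{k-j+2},
\end{equation*}
so the equation is closed for $k\in\{0,1,2\}$ and contains the single source term $\frac{1}{2}a'''\,v_2$ for $k=3$. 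Items~(i) and~(ii) are established in~\textcite{Dorea1976} in the equivalent norm $\varphi\mapsto\sum_{k=0}^m\norm{\varphi^{(k)}}_\infty$, and they then hold automatically for the equivalent norm $\norm{\,\cdot\,}_{C^m}$; so only the sharp bound~(iii) requires new work.

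The maximum-principle step is as follows. At an interior maximum $x_0\in(0,1)$ of $v_k(t,\cdot)$ one has $v_k'(t,x_0)=0$ and $v_k''(t,x_0)\leq 0$. At a boundary maximum $x_0\in\{0,1\}$ the degeneracy $a(x_0)=0$ annihilates the diffusive term, while the hypotheses $a(0)=0=a(1)$ and $a>0$ on $(0,1)$ force $a'(0)\geq 0\geq a'(1)$, so that combined with the one-sided sign of $v_k'(t,x_0)$ the drift contribution satisfies $a'(x_0)v_k'(t,x_0)\leq 0$. Treating the infimum analogously, one obtains
\begin{equation*}
	\tfrac{d}{dt}\norm{v_k(t,\cdot)}_\infty\leq\binom{k}{2}\tfrac{\norm{a''}_\infty}{2}\norm{v_k(t,\cdot)}_\infty+\tfrac{1}{2}\1_{\{k=3\}}\norm{a'''}_\infty\norm{v_2(t,\cdot)}_\infty.
\end{equation*}
For $k\in\{0,1\}$ this immediately yields $\nu_0=\nu_1=0$; for $k=2$, Gronwall's inequality gives $\norm{v_2(t,\cdot)}_\infty\leq e^{\nu_2 t}\norm{\varphi}_{C^2}$ with $\nu_2=\frac{1}{2}\norm{a''}_\infty$; and for $k=3$, inserting the $k=2$ estimate into the source term and applying Duhamel's formula reduces the claim to the elementary inequality $e^{\frac{3}{2}\norm{a''}_\infty t}\cdot\frac{1+e^{\norm{a'''}_\infty t}}{2}\leq e^{\nu_3 t}$, which holds precisely because $\nu_3=\norm{a'''}_\infty+\frac{3}{2}\norm{a''}_\infty$.

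The main obstacle I anticipate is the justification of these pointwise PDE computations for a general $\varphi\in C^m([0,1],\R)$: without sufficient a priori regularity, neither the Leibniz expansion nor the boundary step of the maximum principle is rigorous, and the degeneracy of $a$ at the endpoints precludes appealing to standard interior parabolic regularity. I would handle this by proving~(iii) first on a dense subclass of particularly smooth test functions, for instance those obtained by iterating the resolvent of $A$ (whose regularity is controlled by~\textcite{Dorea1976}), where all the computations above are valid, and then extending~(iii) to the whole of $C^m([0,1],\R)$ by density together with the contraction property $\sup_{t\in[0,\infty)}\norm{S_t\varphi}_\infty\leq\norm{\varphi}_\infty$, which makes the sup-norm estimates stable under passage to the limit.
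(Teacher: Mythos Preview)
Your plan is sound and would yield the correct constants, but it follows a genuinely different route from the paper. The paper never differentiates the backward equation in time; instead it works on the resolvent side. It extracts from the \emph{proofs} of Dorea's Theorems~0--3 the derivative-wise bounds $\norm{(J_\lambda\varphi)^{(k)}}_\infty\leq(\lambda-\nu_k)^{-1}\norm{\varphi^{(k)}}_\infty$ for $k\in\{0,1,2\}$ (and a coupled estimate for $k=3$ that is then rearranged), takes the maximum over $k\leq m$ using $\nu_0\leq\dots\leq\nu_m$ to obtain $\norm{J_\lambda\varphi}_{C^m}\leq(\lambda-\nu_m)^{-1}\norm{\varphi}_{C^m}$, and applies Hille--Yosida to $A-\nu_m$ on $(C^m([0,1],\R),\norm{\cdot}_{C^m})$. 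The point is that Dorea's maximum-principle argument is carried out on the resolvent \emph{ODE} $\lambda\psi-\tfrac12 a\psi''=\varphi$, where boundary behaviour and regularity are elementary, and Hille--Yosida then converts the resolvent inequality into the semigroup inequality automatically, with no separate smoothing or density step. Your time-evolution maximum principle is the natural differential counterpart and produces the same $\nu_m$, but---as you correctly anticipate---justifying $\tfrac{d}{dt}\norm{v_k(t,\cdot)}_\infty\leq\dots$ and the boundary step requires extra regularity, and your proposed density closure needs a little more than the $C^0$-contraction: to pass $\norm{S_t\varphi_n}_{C^m}\leq e^{\nu_m t}\norm{\varphi_n}_{C^m}$ to the limit you must know $S_t\varphi_n\to S_t\varphi$ in $C^m$, which is essentially what you are proving. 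This is repairable (extend $S_t$ by continuity on the dense subclass, then identify the extension via the $C^0$-limit), but it is exactly the bookkeeping that the resolvent/Hille--Yosida route absorbs for free.
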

\begin{proof}
	For every $m\in\{0,1,2,3\}$ Theorem~1 and Remark~1 in~\textcite{Ethier1978}
	and the Main Theorem in~\textcite{Dorea1976} yield for all $t\in[0,\infty)$ that $S_t \colon
	C^m([0,1],\R) \to C^m([0,1],\R)$ and that $\{S_s \colon s\in[0,\infty)\}$
	restricted to $C^m([0,1],\R)$ defines a strongly continuous semigroup with
	generator $(A,\MCD_m(A))$. This proves (i) and (ii).

	It remains to check that~\eqref{eq:estimate_semigroup} can be established
	with our choice of the norm on $C^m([0,1],\R)$.
	For every $m \in \{0,1,2\}$ Theorem~$m$ in~\textcite{Dorea1976} yields for
	all $\lambda > \nu_m$ and all $\varphi \in C^m([0,1],\R)$ that $J_\lambda\varphi \defeq
	(\lambda - A)^{-1}\varphi \in \MCD_m(A)$ exists and its proof shows that
	\begin{equation}
		\norm[\big]{\tfrac{d^m J_\lambda \varphi}{dx^m}}_{\infty} \leq
		\tfrac{1}{\lambda - \nu_m}\norm[\big]{\tfrac{d^m \varphi}{dx^m}}_{\infty}.
		\label{eq:resolvent_estimate}
	\end{equation}
	Fix $m\in\{0,1,2\}$ for the rest of this paragraph. Consider $G \defeq A - \nu_m$
	with domain $\MCD(G) = \MCD_m(A)$. Since $C^\infty([0,1],\R) \subseteq \MCD(G)$, it
	follows that $\MCD(G)$ is dense in $C^m([0,1],\R)$ w.r.t.~$\norm{\:\cdot\:}_{C^m}$.
	Equation~\eqref{eq:resolvent_estimate} implies for all
	$\lambda,\lambda^\prime > 0$ with $\lambda = \lambda^\prime + \nu_m$ and all
	$\varphi \in C^m([0,1],\R)$ that $(\lambda^\prime - G)^{-1}\varphi = J_\lambda\varphi
	\in \MCD(G)$ and
	\begin{equation}
		\begin{split}
			\norm{(\lambda^\prime - G)^{-1}\varphi}_{C^m} =
			\norm{J_\lambda\varphi}_{C^m} &= \max_{k\in\{0,\dotsc,m\}}
			\norm[\big]{\tfrac{d^kJ_\lambda\varphi}{dx^k}}_{\infty} \\
			&\leq \max_{k\in\{0,\dotsc,m\}}
			\tfrac{1}{\lambda-\nu_k}\norm[\big]{\tfrac{d^k \varphi}{dx^k}}_{\infty} \leq
			\tfrac{1}{\lambda-\nu_m}\norm*{\varphi}_{C^m} =
			\tfrac{1}{\lambda^\prime}\norm*{\varphi}_{C^m}.
		\end{split}
	\end{equation}
	Thus $\MCD(G)$ is dense in $C^m([0,1],\R)$, $G$ is dissipative, and
	$\MCR(1-G) = C^m([0,1],\R)$. Consequently, the Hille-Yosida theorem (see, e.g.\
	Theorem~1.2.6 in~\textcite{EthierKurtz1986}) yields that $G$ generates a unique strongly continuous
	contraction semigroup $\{P_t \colon t \in [0,\infty)\}$ on $C^m([0,1],\R)$. This
	implies that $\{e^{\nu_m t}P_t \colon t\in[0,\infty)\}$ is a strongly continuous semigroup on
	$C^m([0,1],\R)$ with infinitesimal generator $\nu_m + G = A$. It follows that
	$\{S_t \colon t\in[0,\infty)\}$ restricted to $C^m([0,1],\R)$ is given by
	$\{e^{\nu_m t}P_t \colon t\in[0,\infty)\}$ and that it holds for
	all $t\in[0,\infty)$ and all $\varphi \in C^m([0,1],\R)$ that
	\begin{equation}
		\norm{S_t\varphi}_{C^m} = e^{\nu_m t}\norm{P_t\varphi}_{C^m} \leq
		e^{\nu_m t}\norm{\varphi}_{C^m}.
	\end{equation}
	Since $m\in\{0,1,2\}$ was arbitrary, \eqref{eq:estimate_semigroup} is shown for all $m
	\in\{0,1,2\}$.

	To prove (iii), it remains to treat the case $m=3$. Define $\tilde \nu_3 \defeq \nu_3 -
	\frac{1}{2}\norm{\frac{d^3 a}{dx^3}}_\infty$. Theorem~3 in~\textcite{Dorea1976}
	yields for all $\lambda > \tilde \nu_3$ and all
	$\varphi \in C^3([0,1],\R)$ that $J_\lambda\varphi \defeq (\lambda - A)^{-1}\varphi \in
	\MCD_3(A)$ exists and its proof shows that
	\begin{equation}
		\norm[\big]{\tfrac{d^3J_\lambda\varphi}{dx^3}}_{\infty} \leq
		\tfrac{1}{\lambda - \tilde \nu_3}\Bigl(\norm[\big]{\tfrac{d^3 \varphi}{dx^3}}_{\infty}
		+
		\tfrac{1}{2}\norm[\big]{\tfrac{d^3 a}{dx^3}}_{\infty}
		\norm[\big]{\tfrac{d^2J_\lambda\varphi}{dx^2}}_\infty \Bigr).
	\end{equation}
	This,~\eqref{eq:resolvent_estimate}, and the inequality $\nu_0\leq \nu_1 \leq
	\nu_2 \leq \tilde \nu_3$ yield for all $\lambda > \tilde \nu_3$ and
	all $\varphi\in C^3([0,1],\R)$ that
	\begin{equation}
		\norm*{J_\lambda\varphi}_{C^3} \leq
		\tfrac{1}{\lambda - \tilde \nu_3}\Bigl(\norm*{\varphi}_{C^3}
		+ \tfrac{1}{2}\norm[\big]{\tfrac{d^3 a}{dx^3}}_{\infty}
		\norm{J_\lambda\varphi}_{C^3} \Bigr).
		\label{eq:resolvente}
	\end{equation}
	If $\lambda > \nu_3$, then $\lambda > \tilde \nu_3$ and $1 -
	\frac{1}{2}\norm{\frac{d^3 a}{dx^3}}_\infty(\lambda -
	\tilde \nu_3)^{-1} = \frac{\lambda - \nu_3}{\lambda - \tilde \nu_3} > 0$,
	rearranging~\eqref{eq:resolvente} therefore yields for all $\lambda >
	\nu_3$ and all $\varphi \in C^3([0,1],\R)$ that
	\begin{equation}
		\norm*{J_\lambda\varphi}_{C^3}
		\leq \tfrac{\lambda - \tilde \nu_3}{\lambda - \nu_3}\tfrac{1}{\lambda - \tilde
		\nu_3}\norm*{\varphi}_{C^3}
		= \tfrac{1}{\lambda - \nu_3}\norm*{\varphi}_{C^3}.
	\end{equation}
	The remaining part of the proof of (iii) follows from an application of the
	Hille-Yosida theorem as in the previous paragraph. This finishes the proof
	of Lemma~\ref{l:semigroup_1d}.
\end{proof}
\subsection{Multidimensional case}
\label{ss:diffusion_d_dim}
Throughout this subsection, we use the definitions
and the notation introduced in the following
Setting~\ref{set:diffusion_ddim}.
\begin{setting}[Diffusion coefficients]\label{set:diffusion_ddim}
	Let $d\in\N$, let $(\Omega,\MCF,\Prob, (\F_t)_{t\in[0,\infty)})$ be a
	stochastic basis, let $W = (W(1),\dotsc,W(d)) \colon [0,\infty) \times \Omega
	\to \R^d$ be a standard $(\F_t)_{t\in[0,\infty)}$-Brownian motion with
	continuous sample paths, let $a_1,\dotsc,a_d \in C^3([0,1],\R)$ satisfy for
	all $i\in\{1,\dotsc,d\}$ and all $x\in(0,1)$ that $a_i(0) = 0 = a_i(1)$
	and $a_i(x) > 0$,
	and we define
	$\mu_0 \defeq 0$, $\mu_1 \defeq 0$, 
	$\mu_2 \defeq
	\max_{i\in\{1,\dotsc,d\}}\frac{1}{2}\norm{\frac{d^2 a_i}{dx^2}}_\infty$, and
	$\mu_3 \defeq \max_{i\in\{1,\dotsc,d\}}(\norm{\frac{d^3 a_i}{dx^3}}_\infty +
	\frac{3}{2}\norm{\frac{d^2 a_i}{dx^2}}_\infty)$.

	Theorem~3.2 in \textcite{ShigaShimizu1980} implies that there exist
	$(\F_t)_{t\in[0,\infty)}$-adapted processes
	$Y^x = (Y^x(1),\dotsc,Y^x(d)) \colon [0,\infty)\times\Omega \to [0,1]^d$,
	$x\in[0,1]^d$,
	with continuous sample paths satisfying for all $i\in\{1,\dotsc,d\}$,
	all $t\in[0,\infty)$, and all $x = (x_1,\dotsc,x_d) \in [0,1]^d$ that $\Prob$-a.s.
	\begin{equation}
		Y^x_t(i) = x_i + \int_0^t \sqrt{a_i(Y^x_s(i))} \dd W_s(i).
	\end{equation}
	We denote by $\{T^2_t \colon t\in[0,\infty)\}$ the associated strongly continuous contraction
	semigroup on $C([0,1]^d,\R)$, which satisfies for all $t\in[0,\infty)$, all
	$f\in C([0,1]^d,\R)$, and all $x\in[0,1]^d$ that $(T^2_t f)(x) =
	\Exp{f(Y^x_t)}$; see Remark~3.2 in
	\textcite{ShigaShimizu1980}.
	For every $i\in\{1,\dotsc,d\}$ we denote by $\{S^i_t \colon t\in[0,\infty)\}$ the
	strongly continuous contraction semigroup on $C([0,1],\R)$ associated with
	$Y^{\cdot}(i)$, which satisfies for all $t\in[0,\infty)$, all $\varphi \in
	C([0,1],\R)$, and all $x \in [0,1]$ that $(S^i_t \varphi)(x) =
	\Exp{\varphi(Y^x_t(i))}$, and by
	\begin{equation}
		[0,\infty) \times [0,1] \times \MCB(\R) \ni (t,x,A) \mapsto
		p^i_t(x,A) \in [0,1]
	\end{equation}
	the corresponding transition kernel.

	Note that $Y^{\cdot}(i)$, $i\in\{1,\dotsc,d\}$, are
	independent diffusion processes with generators $A_i \colon
	C^2([0,1],\R) \to C([0,1],\R)$, $i\in\{1,\dotsc,d\}$, satisfying for all
	$i\in\{1,\dotsc,d\}$, all $\varphi \in C^2([0,1],\R)$, and all $x\in[0,1]$ that
	\begin{equation}
		(A_i\varphi)(x) = \frac{1}{2} a_i(x)\frac{d^2 \varphi}{dx^2}(x),
	\end{equation}
	so that the result of Subsection~\ref{ss:diffusion_1_dim} applies.
	Moreover, it holds for all $i\in\{1,\dotsc,d\}$, all $t\in[0,\infty)$, all
	$\varphi \in C([0,1],\R)$, and all $x\in[0,1]$ that
	\begin{equation}
		(S^i_t\varphi)(x)
		= \int p^i_t(x,dy)\varphi(y)
	\end{equation}
	and it holds for all $t\in[0,\infty)$, all
	$ f \in C([0,1]^d,\R)$, and all $x = (x_1,\dotsc,x_d) \in[0,1]^d$ that
	\begin{equation}
		(T^2_t f)(x) = \int \bigotimes_{k=1}^d p^k_t(x_k,dy_k) f(y).\qedhere
	\end{equation}
\end{setting}

The aim of this subsection is to show for all $m\in\{0,1,2,3\}$ that it
holds for all $t\in[0,\infty)$ that $T^2_t\colon C^m([0,1]^d,\R) \to
C^m([0,1]^d,\R)$ and for all $t\in[0,\infty)$ and all $ f\in
C^m([0,1]^d,\R)$ that $\norm{T^2_t f}_{C^m} \leq e^{\mu_m
t}\norm{ f}_{C^m}$; see Lemma~\ref{l:semigroup_diffusion} below.
\begin{lemma}[Continuity property] \label{l:continuity_semigroup1}
	Assume \textup{Setting~\ref{set:diffusion_ddim}}, let
	$t\in[0,\infty)$, let $f \in C([0,1]^{d},\R)$, and let
	$I\subseteq\{1,\dotsc,d\}$.
	Then the function
	\begin{equation}
		[0,1]^{d} \ni x \mapsto \int
		\bigotimes_{k\in\{1,\dotsc,d\}\setminus I} p^k_t(x_k, dy_k)
		 f\bigl( (x_i \1_{i \in I} + y_i \1_{i \not\in I})_{i\in\{1,\dotsc,d\}}
		\bigr)
	\end{equation}
	is continuous.
\end{lemma}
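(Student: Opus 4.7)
The plan is to integrate out the coordinates $k\in I^c\defeq\{1,\dotsc,d\}\setminus I$ one at a time and to show by induction that each partial integral is jointly continuous in all its remaining free variables. The two ingredients I will combine are uniform continuity of the integrand on the compact cube $[0,1]^d\times[0,1]^{I^c}$ and the one-dimensional Feller property recorded in Lemma~\ref{l:semigroup_1d}(i) with $m=0$.

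To set up the induction, I would enumerate $I^c=\{k_1,\dotsc,k_r\}$ and, after applying Fubini (which is justified since $f$ is bounded and each $p^k_t(x_k,\cdot)$ is a probability measure), define
\[
F_0(x,y_{k_1},\dotsc,y_{k_r})\defeq f\bigl((x_i\1_{i\in I}+y_i\1_{i\notin I})_{i\in\{1,\dotsc,d\}}\bigr)
\]
on $[0,1]^d\times[0,1]^{I^c}$ and, for $j\in\{1,\dotsc,r\}$,
\[
F_j(x,y_{k_{j+1}},\dotsc,y_{k_r})\defeq \int p^{k_j}_t(x_{k_j},dy_{k_j})\,F_{j-1}(x,y_{k_j},y_{k_{j+1}},\dotsc,y_{k_r}).
\]
Then $F_r(x)$ coincides with the function in the statement, so it suffices to show that every $F_j$ is jointly continuous on its compact domain.

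The base case $j=0$ is immediate from continuity of $f$ and of the coordinate substitution. For the inductive step I would assume that $F_{j-1}$ is jointly continuous, hence bounded and uniformly continuous on its compact domain, and show continuity of $F_j$ at an arbitrary point. Given $(x^n,\tilde y^n)\to(x,\tilde y)$, with $\tilde y$ denoting the block $(y_{k_{j+1}},\dotsc,y_{k_r})$, I would split
\[
F_j(x^n,\tilde y^n)-F_j(x,\tilde y)=\int p^{k_j}_t(x^n_{k_j},dy)\bigl[F_{j-1}(x^n,y,\tilde y^n)-F_{j-1}(x,y,\tilde y)\bigr]+B_n,
\]
where $B_n\defeq (S^{k_j}_t\varphi)(x^n_{k_j})-(S^{k_j}_t\varphi)(x_{k_j})$ for the function $\varphi\defeq F_{j-1}(x,\cdot,\tilde y)\in C([0,1],\R)$. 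The first summand is bounded in absolute value by $\sup_{y\in[0,1]}\abs{F_{j-1}(x^n,y,\tilde y^n)-F_{j-1}(x,y,\tilde y)}$, which tends to $0$ by uniform continuity of $F_{j-1}$. The term $B_n$ tends to $0$ because $S^{k_j}_t\varphi\in C([0,1],\R)$ by Lemma~\ref{l:semigroup_1d}(i) applied with $m=0$.

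The main obstacle is that the transition kernel $p^{k_j}_t(x_{k_j},\cdot)$ itself depends on the variable $x_{k_j}$ whose continuous dependence we wish to transfer through the integral, so joint continuity of $F_{j-1}$ in its parameters does not immediately yield joint continuity of $F_j$. The add-and-subtract decomposition above resolves this by decoupling the two sources of dependence: the parameter dependence of the integrand is absorbed by uniform continuity on a compact set, whereas the kernel's dependence on $x_{k_j}$ is absorbed by the one-dimensional Feller property from Subsection~\ref{ss:diffusion_1_dim}.
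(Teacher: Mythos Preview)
Your proof is correct and uses the same core idea as the paper: an add-and-subtract split where uniform continuity of the integrand on the compact cube handles the parameter dependence of the integrand, and the Feller property handles the dependence of the kernel on $x$. The only difference is organizational: the paper integrates out all coordinates in $I^c$ at once and, for the second summand, appeals directly to the Feller property of the product semigroup on $[0,1]^{|I^c|}$ (which is available from Setting~\ref{set:diffusion_ddim}, since the $Y(k)$ are independent one-dimensional Feller diffusions), whereas you peel off one coordinate at a time by induction and invoke only the one-dimensional Feller property from Lemma~\ref{l:semigroup_1d}. Your version is slightly longer but more self-contained, relying only on the one-dimensional result; the paper's is shorter but tacitly uses that the product of Feller kernels is again Feller.
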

\begin{proof}
	Throughout this proof, we denote by $ f_I \colon [0,1]^d \times [0,1]^d \to
	\R$ the function satisfying for all $x,y\in[0,1]^d$ that $ f_I(x,y) =
	 f( (x_i \1_{i \in I} + y_i \1_{i \not\in I})_{i\in\{1,\dotsc,d\}} )$.
	Let $\{x^n \colon n\in\N\} \subseteq [0,1]^d$ be a convergent
	sequence with $\lim_{n\to\infty}x^n = x \in [0,1]^{d}$. Then it holds for all $n\in\N$ that
	\begin{equation}
		\begin{split}
			\MoveEqLeft \abs*{\int \bigotimes_{k\in\{1,\dotsc,d\}\setminus I}
			p^k_t(x_k^n,dy_k) f_I(x^n,y) - \int
			\bigotimes_{k\in\{1,\dotsc,d\}\setminus I} p^k_t(x_k,dy_k) f_I(x,y)}\\
			&\leq \abs*{\int \bigotimes_{k\in\{1,\dotsc,d\}\setminus I}
			p^k_t(x_k^n,dy_k)\bigl( f_I(x^n,y) -  f_I(x,y)\bigr)}\\
			&\quad+ \abs*{\int \bigotimes_{k\in\{1,\dotsc,d\}\setminus I}
			p^k_t(x_k^n,dy_k) f_I(x,y) - \int
			\bigotimes_{k\in\{1,\dotsc,d\}\setminus I} p^k_t(x_k,dy_k) f_I(x,y)}\\
			&\leq \sup_{y\in [0,1]^{d}}\abs[\big]{ f_I(x^n,y) -  f_I(x,y)} \\
			&\quad+ \abs*{\int \bigotimes_{k\in\{1,\dotsc,d\}\setminus I}
			p^k_t(x_k^n,dy_k) f_I(x,y) - \int
			\bigotimes_{k\in\{1,\dotsc,d\}\setminus I} p^k_t(x_k,dy_k) f_I(x,y)}.
		\end{split}
	\end{equation}
	By uniform continuity of $ f$ on $[0,1]^{d}$,
	the first summand on the right-hand side converges to zero as $n\to\infty$.
	For fixed $x \in [0,1]^d$, the function $[0,1]^{d} \ni y \mapsto
	 f_I(x,y)$ is
	continuous, which implies the continuity of
	$[0,1]^{d} \ni z \mapsto \int \bigotimes_{k\in\{1,\dotsc,d\}\setminus I}
	p^k_t(z_k,dy_k) f_I(x,y)$.  Therefore, the second summand on the
	right-hand side converges to
	zero as $n\to\infty$. This finishes the proof of
	Lemma~\ref{l:continuity_semigroup1}.
\end{proof}
\begin{lemma}[Continuity of pure derivatives] \label{l:continuity_semigroup2}
	Assume \textup{Setting~\ref{set:diffusion_ddim}}, let
	$m\in\{0,1,2,3\}$, let $t\in[0,\infty)$, and let $f \in C^m([0,1]^d,\R)$. 
	Then it holds for every
	$i\in\{1,\dotsc,d\}$ that the partial derivative
	\begin{equation}
		[0,1]^d \ni x \mapsto \frac{\partial^m}{\partial x_i^m} \int
		p^i_t(x_i,dy_i) f(x_1,\dotsc,x_{i-1},y_i,x_{i+1},\dotsc,x_d)
		\label{eq:continuity_semigroup2}
	\end{equation}
	exists and is continuous.
\end{lemma}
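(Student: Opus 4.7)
The plan is to reduce the claim to the one-dimensional setting of Lemma~\ref{l:semigroup_1d}. Fix $i\in\{1,\dotsc,d\}$, and for every $\hat x_i\in[0,1]^{d-1}$ define $\varphi_{\hat x_i}\in C^m([0,1],\R)$ by
\begin{equation*}
\varphi_{\hat x_i}(y)=f(x_1,\dotsc,x_{i-1},y,x_{i+1},\dotsc,x_d).
\end{equation*}
The function in~\eqref{eq:continuity_semigroup2} then equals $(S^i_t\varphi_{\hat x_i})(x_i)$, and part~(i) of Lemma~\ref{l:semigroup_1d} applied to $a_i$ immediately yields that, for each fixed $\hat x_i$, $S^i_t\varphi_{\hat x_i}\in C^m([0,1],\R)$. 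In particular, the $m$-th derivative with respect to $x_i$ exists and is continuous in the one-dimensional variable $x_i$.

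The key step is to upgrade this to joint continuity in $x\in[0,1]^d$. First I would verify that the map $[0,1]^{d-1}\ni\hat x_i\mapsto\varphi_{\hat x_i}\in C^m([0,1],\R)$ is continuous: for every $k\in\{0,\dotsc,m\}$ the $k$-th derivative $\tfrac{d^k\varphi_{\hat x_i}}{dy^k}(y)$ equals $(\partial^\alpha f)(x_1,\dotsc,x_{i-1},y,x_{i+1},\dotsc,x_d)$ with $\alpha\in\N_0^d$ the multiindex having entry $k$ in position $i$ and $0$ elsewhere, and $\partial^\alpha f$ is uniformly continuous on the compact set $[0,1]^d$. Combining this with part~(iii) of Lemma~\ref{l:semigroup_1d}, which yields the operator-norm bound $\norm{S^i_t\psi}_{C^m}\leq e^{\nu_m t}\norm{\psi}_{C^m}$ for all $\psi\in C^m([0,1],\R)$, the composition $\hat x_i\mapsto S^i_t\varphi_{\hat x_i}\in C^m([0,1],\R)$ is also continuous.

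To conclude, for a sequence $x^n=(x_i^n,\hat x_i^n)\to x=(x_i,\hat x_i)$ in $[0,1]^d$ set $\psi_n\defeq S^i_t\varphi_{\hat x_i^n}$ and $\psi\defeq S^i_t\varphi_{\hat x_i}$, and split via the triangle inequality
\begin{equation*}
\abs*{\tfrac{d^m\psi_n}{dy^m}(x_i^n)-\tfrac{d^m\psi}{dy^m}(x_i)}\leq\norm*{\psi_n-\psi}_{C^m}+\abs*{\tfrac{d^m\psi}{dy^m}(x_i^n)-\tfrac{d^m\psi}{dy^m}(x_i)}.
\end{equation*}
The first summand tends to $0$ by the preceding paragraph, and the second tends to $0$ by continuity of $\tfrac{d^m\psi}{dy^m}$ on $[0,1]$. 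No serious obstacle is expected; the whole argument hinges on recognizing that $\hat x_i\mapsto\varphi_{\hat x_i}$ is $C^m$-norm continuous, which, combined with the bounded-operator property of $S^i_t$ from Lemma~\ref{l:semigroup_1d}, transfers continuity through the semigroup.
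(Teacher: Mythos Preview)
Your proof is correct and follows essentially the same approach as the paper: both fix the transverse variable $\hat x_i$, invoke Lemma~\ref{l:semigroup_1d} to get $S^i_t\varphi_{\hat x_i}\in C^m([0,1],\R)$ together with the operator-norm bound, use uniform continuity of the partial derivatives of $f$ to obtain $C^m$-continuity of $\hat x_i\mapsto\varphi_{\hat x_i}$, and then split the difference along a convergent sequence via the same triangle inequality. The only cosmetic distinction is that the paper writes the estimate~\eqref{eq:resolvent_estimate}-style bound out explicitly with the transition kernel rather than phrasing it as bounded linearity of $S^i_t$ on $C^m$.
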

\begin{proof}
	It suffices to prove the claim for $i=1$.
	For fixed $x \in [0,1]^{d}$, the function $[0,1] \ni y \mapsto  f(y,\hat
	x_1)$ is in $C^m([0,1],\R)$, so Lemma~\ref{l:semigroup_1d} implies that the function
	$[0,1] \ni z \mapsto \int p^1_t(z,dy_1) f(y_1,\hat x_1)$ is in $C^m([0,1],\R)$. This
	shows the existence of the partial derivative~\eqref{eq:continuity_semigroup2}.
	It remains to show continuity on $[0,1]^d$. For that,
	let $\{x^n\colon n\in\N\}\subseteq [0,1]^d$
	be a convergent sequence with $\lim_{n\to\infty} x^n = x \in [0,1]^d$. 
	Lemma~\ref{l:semigroup_1d} implies for all $n\in\N$ that
	\begin{equation}
		\begin{split}
			\MoveEqLeft\abs*{\frac{\partial^m}{\partial (x^n_1)^m}\int p^1_t(x_1^n,dy_1)
			\left( f(y_1,\widehat{x^n}_1) -
			 f(y_1,\hat x_1)\right)} \\
			&\leq e^{\mu_m t}
			\adjustlimits \max_{k\in\{0,\dotsc,m\}} \sup_{z \in
			[0,1]}\,\abs*{\frac{\partial^k  f}{\partial
			z^k}(z,\widehat{x^n}_1) - \frac{\partial^k  f}{\partial
			z^k}(z, \hat x_1)}.
		\end{split}
		\label{eq:semigroup_partialder}
	\end{equation}
	Since $ f \in C^m([0,1]^d,\R)$, it follows for all $k\in\{0,\dotsc,m\}$ that
	$[0,1]^d \ni x \mapsto \frac{\partial^k  f}{\partial x_1^k}(x)$ is uniformly
	continuous.
	Therefore, the right-hand side of~\eqref{eq:semigroup_partialder} converges to
	zero as $n\to\infty$. 
	It holds for all $n\in\N$ that
	\begin{equation}
		\begin{split}
			\MoveEqLeft[3] \abs*{\frac{\partial^m}{\partial (x^n_1)^m}\int p^1_t(x_1^n,dy_1)
			 f(y_1,\widehat{x^n}_1) - \frac{\partial^m}{\partial x_1^m}\int
			p^1_t(x_1,dy_1)  f(y_1,\hat x_1)}\\
			\leq {} &
			\abs*{\frac{\partial^m}{\partial (x^n_1)^m}\int p^1_t(x_1^n,dy_1)
			\bigl( f(y_1,\widehat{x^n}_1) -  f(y_1,\hat x_1)\bigr)} \\
			&+ \abs*{\frac{\partial^m}{\partial (x^n_1)^m}\int p^1_t(x_1^n,dy_1)
			 f(y_1,\hat x_1) - \frac{\partial^m}{\partial x_1^m}\int
			p^1_t(x_1,dy_1)  f(y_1,\hat x_1)}
		\end{split}
		\label{eq:semigroup_partialder2}
	\end{equation}
	The first summand on the right-hand side of~\eqref{eq:semigroup_partialder2}
	converges to zero as $n\to\infty$ by~\eqref{eq:semigroup_partialder}.
	We have shown above that $[0,1] \ni z \mapsto \int p^1_t(z,dy_1) f(y_1,\hat x_1)$ is in
	$C^m([0,1],\R)$, so also the second summand on the right-hand side
	of~\eqref{eq:semigroup_partialder2} converges to zero as $n\to\infty$. This
	finishes the proof of Lemma~\ref{l:continuity_semigroup2}.
\end{proof}
\begin{lemma}[Continuity of pure derivatives, continued] \label{l:semigroup_partial_diag}
	Assume \textup{Setting~\ref{set:diffusion_ddim}}, let
	$m\in\{0,1,2,3\}$, let $t\in[0,\infty)$, and let $f \in C^m([0,1]^d,\R)$.
	Then it holds for every $i\in\{1,\dotsc,d\}$ that the partial derivative
	\begin{equation}
		[0,1]^d \ni x \mapsto \frac{\partial^m}{\partial x_i^m} \int \bigotimes_{k=1}^d
		p^k_t(x_k,dy_k) f(y)
		\label{eq:semigroup_partial_diag}
	\end{equation}
	exists and is continuous.
\end{lemma}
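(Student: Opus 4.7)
My plan is to exploit the product structure of the kernel $\bigotimes_{k=1}^d p^k_t(x_k,dy_k)$: factor out the integration over $y_i$ to reduce the claim to the one-dimensional semigroup $S^i_t$ acting on a suitable slice function, and then combine Lemma~\ref{l:semigroup_1d} (for smoothness in $x_i$) with Lemma~\ref{l:continuity_semigroup1} (for continuity in $\hat x_i$). Fix $i\in\{1,\dotsc,d\}$. Since each $p^k_t(x_k,\cdot)$ is a probability measure and $f$ is bounded, Fubini's theorem lets me write
\begin{equation*}
	u(x)\defeq\int \bigotimes_{k=1}^d p^k_t(x_k,dy_k)\,f(y) = \bigl(S^i_t g_{\hat x_i}\bigr)(x_i),
\end{equation*}
where for each $\hat x_i\in[0,1]^{d-1}$ the slice function $g_{\hat x_i}\colon[0,1]\to\R$ is defined by $g_{\hat x_i}(z)\defeq\int\bigotimes_{k\neq i}p^k_t(x_k,dy_k)\,f(y_1,\dotsc,y_{i-1},z,y_{i+1},\dotsc,y_d)$.

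Next I would verify that $g_{\hat x_i}\in C^m([0,1],\R)$. The measures $\bigotimes_{k\neq i}p^k_t(x_k,dy_k)$ do not depend on $z$, so differentiating $m$ times under the integral sign (justified by the uniform boundedness on $[0,1]^d$ of the partial derivatives of $f$ of order at most $m$) yields the explicit formula
\begin{equation*}
	g_{\hat x_i}^{(k)}(z) = \int\bigotimes_{j\neq i}p^j_t(x_j,dy_j)\,\tfrac{\partial^k f}{\partial y_i^k}(y_1,\dotsc,y_{i-1},z,y_{i+1},\dotsc,y_d)
\end{equation*}
for $0\le k\le m$, which is continuous in $z$ by dominated convergence. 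Lemma~\ref{l:semigroup_1d}(i) then ensures $S^i_t g_{\hat x_i}\in C^m([0,1],\R)$, establishing the existence of the partial derivative in~\eqref{eq:semigroup_partial_diag} for each fixed $\hat x_i$.

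For continuity on $[0,1]^d$, I would take an arbitrary sequence $\{x^n\}\subseteq[0,1]^d$ with $\lim_{n\to\infty}x^n=x$ and add and subtract to split
\begin{equation*}
	\tfrac{d^m}{dz^m}(S^i_t g_{\widehat{x^n}_i})(x_i^n) - \tfrac{d^m}{dz^m}(S^i_t g_{\hat x_i})(x_i)
\end{equation*}
into the sum of $\tfrac{d^m}{dz^m}(S^i_t(g_{\widehat{x^n}_i}-g_{\hat x_i}))(x_i^n)$ and $\tfrac{d^m}{dz^m}(S^i_t g_{\hat x_i})(x_i^n)-\tfrac{d^m}{dz^m}(S^i_t g_{\hat x_i})(x_i)$. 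The second difference tends to zero as $n\to\infty$ because $S^i_t g_{\hat x_i}\in C^m([0,1],\R)$ and $x_i^n\to x_i$. For the first, Lemma~\ref{l:semigroup_1d}(iii) applied to the one-dimensional semigroup $S^i_t$ gives $\norm{S^i_t(g_{\widehat{x^n}_i}-g_{\hat x_i})}_{C^m}\le e^{\mu_m t}\norm{g_{\widehat{x^n}_i}-g_{\hat x_i}}_{C^m}$, reducing the problem to showing $\norm{g_{\widehat{x^n}_i}-g_{\hat x_i}}_{C^m}\to 0$. For each $k\in\{0,\dotsc,m\}$, applying Lemma~\ref{l:continuity_semigroup1} (with $I=\{i\}$) to $\partial^k f/\partial y_i^k\in C([0,1]^d,\R)$ shows that $(\hat x_i,z)\mapsto g_{\hat x_i}^{(k)}(z)$ is continuous, hence uniformly continuous on the compact set $[0,1]^{d-1}\times[0,1]$, so $\sup_{z\in[0,1]}\abs{g_{\widehat{x^n}_i}^{(k)}(z)-g_{\hat x_i}^{(k)}(z)}\to 0$. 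The main obstacle is the simultaneous handling of two distinct sources of perturbation (in the base point $x_i$ and in the slice function $g_{\hat x_i}$); the add-and-subtract splitting isolates them, after which Lemma~\ref{l:semigroup_1d}(iii) controls the operator-norm variation in $g_{\hat x_i}$ while continuity of $S^i_t g_{\hat x_i}$ controls the variation in $x_i$.
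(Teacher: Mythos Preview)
Your argument is correct and uses the same ingredients as the paper: the product structure via Fubini, Lemma~\ref{l:semigroup_1d} for the one-dimensional regularity and operator bound, dominated convergence for differentiation under the integral, and Lemma~\ref{l:continuity_semigroup1} for continuity in the remaining coordinates. The only organizational difference is that the paper swaps the order of integration once more to write $\partial_{x_i}^m u(x)=\int\bigotimes_{k\neq i}p^k_t(x_k,dy_k)\,\bigl[\partial_{x_i}^m\!\int p^i_t(x_i,dy_i)f(y)\bigr]$ and then invokes Lemma~\ref{l:continuity_semigroup2} for the bracketed term before applying Lemma~\ref{l:continuity_semigroup1}, whereas you integrate over $\hat y_i$ first to form $g_{\hat x_i}$ and perform the add-and-subtract estimate directly; this bypasses Lemma~\ref{l:continuity_semigroup2} but effectively reproduces its content.
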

\begin{proof}
	It suffices to show the claim for $i=1$.
	By Fubini's theorem, it holds for all $x\in [0,1]^d$ that
	\begin{equation}
		\int \bigotimes_{k=1}^d p^k_t(x_k,dy_k) f(y)
		= \int p^1_t(x_1,dy_1) \int \bigotimes_{k=2}^d p^k_t(x_k,dy_k)
		 f(y).
		\label{eq:semigroup_existence_diag}
	\end{equation}
	For fixed $x \in [0,1]^{d}$, the fact that $ f \in C^m([0,1]^d,\R)$ and
	the dominated convergence theorem imply that the function $[0,1] \ni z\mapsto
	\int \bigotimes_{k=2}^d p^k_t(x_k,dy_k) f(z,\hat y_1)$ is in $C^m([0,1],\R)$.
	Therefore,~\eqref{eq:semigroup_existence_diag} and
	Lemma~\ref{l:semigroup_1d} prove
	the existence of the partial derivative~\eqref{eq:semigroup_partial_diag}.
	Moreover, Fubini's theorem, the fact that $ f\in C^m([0,1]^d,\R)$,
	Lemma~\ref{l:semigroup_1d}, and the
	dominated convergence theorem imply for all $x\in [0,1]^d$ that
	\begin{equation}
		\begin{split}
			\frac{\partial^m}{\partial x_1^m}\int \bigotimes_{k=1}^d
			p^k_t(x_k,dy_k) f(y)
			&= \frac{\partial^m}{\partial x_1^m}\int \bigotimes_{k=2}^d p^k_t(x_k,
			dy_k)\int p^1_t(x_1,dy_1) f(y)\\
			&= \int \bigotimes_{k=2}^d p^k_t(x_k,dy_k)
			\frac{\partial^m}{\partial x_1^m}\int p^1_t(x_1,dy_1) f(y).
		\end{split}
	\end{equation}
	Consequently, Lemma~\ref{l:continuity_semigroup2} and
	Lemma~\ref{l:continuity_semigroup1} imply the continuity
	of~\eqref{eq:semigroup_partial_diag}.
	This completes the proof of
	Lemma~\ref{l:semigroup_partial_diag}.
\end{proof}
\begin{lemma}[Continuity of mixed second derivatives] \label{l:semigroup_mixed_2nd}
	Assume \textup{Setting~\ref{set:diffusion_ddim}} and let
	$t\in[0,\infty)$ and $f \in C^2([0,1]^d,\R)$. Then it holds for every
	$i,j\in\{1,\dotsc,d\}$ that the partial derivative
	\begin{equation}
		[0,1]^d \ni x \mapsto \frac{\partial^2}{\partial x_i\partial x_j} \int
		\bigotimes_{k=1}^d
		p^k_t(x_k, dy_k) f(y)
		\label{eq:semigroup_mixed_2nd}
	\end{equation}
	exists and is continuous.
\end{lemma}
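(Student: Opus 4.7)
The case $i = j$ is immediate from Lemma~\ref{l:semigroup_partial_diag} (applied with $m = 2$). So assume $i \neq j$; without loss of generality $i = 1$ and $j = 2$. The plan is to peel off the factors $S^1_t$ and $S^2_t$ one at a time, each time moving a partial derivative through integrations that do not involve the corresponding variable via dominated convergence, and then reading off continuity from Lemmas~\ref{l:continuity_semigroup1} and~\ref{l:continuity_semigroup2}. The uniform majorants required at each swap are supplied by the bound $\mu_0 = \mu_1 = 0$ in Lemma~\ref{l:semigroup_1d}.

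Using Fubini, write $(T^2_t f)(x) = \int p^1_t(x_1, dy_1) \int p^2_t(x_2, dy_2)\, g(y_1, y_2, x_3, \ldots, x_d)$ where $g(y_1, y_2, x_3, \ldots, x_d) \defeq \int \bigotimes_{k=3}^d p^k_t(x_k, dy_k)\, f(y_1, \ldots, y_d)$. Since $f \in C^2([0,1]^d, \R)$, dominated convergence gives that $\partial_{y_1}^a \partial_{y_2}^b g$ exists and equals $\int \bigotimes_{k=3}^d p^k_t(x_k, dy_k)\, \partial_{y_1}^a \partial_{y_2}^b f$ for all $a+b \leq 2$, is bounded by $\norm{f}_{C^2}$, and is continuous on $[0,1]^d$ by Lemma~\ref{l:continuity_semigroup1}. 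Lemma~\ref{l:semigroup_1d} (with $m=1$) then implies that $x_2 \mapsto \int p^2_t(x_2, dy_2)\, g(y_1, y_2, x_3, \ldots, x_d)$ is $C^1$ with derivative bounded uniformly in $y_1$ by $\norm{f}_{C^1}$. Dominated convergence applied to the outer integral therefore yields $\partial_{x_2}(T^2_t f)(x) = \int p^1_t(x_1, dy_1)\, \psi(y_1, x_2, x_3, \ldots, x_d)$ where $\psi \defeq \partial_{x_2} \int p^2_t(x_2, dy_2)\, g$. A further swap of $\partial_{x_2}$ with the integrals $\int \bigotimes_{k=3}^d p^k_t(x_k, dy_k)$ gives $\psi(y_1, x_2, x_3, \ldots, x_d) = \int \bigotimes_{k=3}^d p^k_t(x_k, dy_k)\, \partial_{x_2} \int p^2_t(x_2, dy_2)\, f(y_1, y_2, y_3, \ldots, y_d)$; the inner expression is continuous on $[0,1]^d$ by Lemma~\ref{l:continuity_semigroup2} applied to $f$ with $i=2$ and $m=1$, whence $\psi$ is continuous on $[0,1]^d$ by Lemma~\ref{l:continuity_semigroup1}.

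To conclude existence and continuity of $\partial_{x_1} \partial_{x_2}(T^2_t f) = \partial_{x_1} \int p^1_t(x_1, dy_1)\, \psi$, the idea is to apply the argument of Lemma~\ref{l:continuity_semigroup2} with $i = 1$ and $m = 1$ to the function $\psi$; inspection of that proof shows it only requires $\psi$ to be continuous and to be $C^1$ in $y_1$ with continuous $\partial_{y_1}\psi$. The former has just been shown, and the latter follows by repeating the previous paragraph with $\partial_{y_1} f \in C^1([0,1]^d, \R)$ in place of $f$, which gives $\partial_{y_1}\psi = \int \bigotimes_{k=3}^d p^k_t(x_k, dy_k)\, \partial_{x_2} \int p^2_t(x_2, dy_2)\, \partial_{y_1} f$ and thereby its continuity on $[0,1]^d$ by the same combination of lemmas. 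The main obstacle is the careful bookkeeping of the three layers of interchanges between derivatives and integrations and the verification of joint continuity on $[0,1]^d$ at each intermediate step; once the intermediate objects are in place, the final conclusion is a direct consequence of Lemmas~\ref{l:continuity_semigroup1} and~\ref{l:continuity_semigroup2}.
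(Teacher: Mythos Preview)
Your strategy is exactly the paper's: separate the product kernel via Fubini, push derivatives through inert integrals by dominated convergence, invoke Lemma~\ref{l:semigroup_1d} for the differentiability of each one-dimensional semigroup, and read off joint continuity from Lemmas~\ref{l:continuity_semigroup1} and~\ref{l:continuity_semigroup2}. The organization differs only superficially (you peel off $S^1_t$ and $S^2_t$ in the opposite order from the paper).

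There is, however, one genuine gap. You assert that
\[
\partial_{y_1}\psi \;=\; \int \bigotimes_{k=3}^d p^k_t(x_k, dy_k)\, \partial_{x_2}\!\int p^2_t(x_2, dy_2)\, \partial_{y_1} f
\]
``follows by repeating the previous paragraph with $\partial_{y_1} f$ in place of $f$''. But repeating that paragraph only constructs the right-hand side as $\partial_{x_2}\bigl(S^2_t(\partial_{y_1} g)\bigr)$ and shows it is continuous; it does not show that this object equals $\partial_{y_1}$ of $\partial_{x_2}(S^2_t g)$. What you need is the interchange
\[
\partial_{y_1}\!\bigl[\partial_{x_2}(S^2_t g(y_1,\cdot))(x_2)\bigr]
= \partial_{x_2}\!\bigl[(S^2_t \partial_{y_1} g(y_1,\cdot))(x_2)\bigr],
\]
i.e.\ $\partial_{y_1}\partial_{x_2} = \partial_{x_2}\partial_{y_1}$ on $(y_1,x_2)\mapsto (S^2_t g(y_1,\cdot))(x_2)$. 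Since Lemma~\ref{l:semigroup_1d} gives no explicit formula for $\partial_{x_2}(S^2_t\varphi)$ that you could differentiate in $y_1$ directly, this interchange is not automatic and requires Schwarz's theorem. The paper confronts precisely this point: it first establishes the ``easy'' order $\partial_{x_2}\partial_{x_1}$ (where $\partial_{x_1}$ enters by dominated convergence and then $\partial_{x_2}$ by Lemma~\ref{l:semigroup_1d}), proves that derivative is continuous, and only then invokes Schwarz's theorem to obtain the other order before adding the $p^1_t$ layer. Once you insert the analogous Schwarz step, your argument is complete and coincides with the paper's.
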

\begin{proof}
	The case where $i=j$ is treated by Lemma~\ref{l:semigroup_partial_diag}. It
	suffices to consider $i=1$ and $j=2$.
	The dominated convergence theorem
	implies for all $x\in [0,1]^d$ that
	\begin{equation}
		\frac{\partial}{\partial x_1} \int \bigotimes_{k=2}^d
		p^k_t(x_k,dy_k) f(x_1,\hat y_1)
		= \int \bigotimes_{k=2}^d
		p^k_t(x_k,dy_k)\frac{\partial  f}{\partial x_1} (x_1,\hat
		y_1).
		\label{eq:semigroup_mixed1}
	\end{equation}
	Using~\eqref{eq:semigroup_mixed1} and Fubini's theorem, it follows for all
	$x\in [0,1]^d$ that
	\begin{equation}
		\frac{\partial}{\partial x_1} \int
		\bigotimes_{k=2}^d p^k_t(x_k,dy_k) f(x_1, \hat y_1)
		= \int p^2_t(x_2,dy_2) \int
		\bigotimes_{k=3}^d p^k_t(x_k,dy_k)\frac{\partial  f}{\partial x_1}
		(x_1, \hat y_1).
		\label{eq:semigroup_mixed2}
	\end{equation}
	For fixed $x \in [0,1]^{d}$, the fact that $ f \in C^2([0,1]^d,\R)$ and
	the dominated convergence theorem imply that the function $[0,1] \ni z \mapsto
	\int \bigotimes_{k=3}^d p^k_t(x_k,dy_k) \frac{\partial  f}{\partial
	x_1}(x_1,z,\hat y_{12})$ is in $C^1([0,1],\R)$.
	Therefore,~\eqref{eq:semigroup_mixed2} and
	Lemma~\ref{l:semigroup_1d} imply the existence of the partial derivative
	$[0,1]^d \ni x \mapsto \frac{\partial^2}{\partial x_2 \partial x_1}\int
	\bigotimes_{k=2}^d p^k_t(x_k,dy_k) f(x_1,\hat y_1)$.
	Fubini's theorem, Lemma~\ref{l:semigroup_1d}, and the dominated convergence
	theorem imply for all $x\in [0,1]^d$ that
	\begin{equation}
		\frac{\partial^2}{\partial x_2 \partial x_1} \int
		\bigotimes_{k=2}^d p^k_t(x_k,dy_k) f(x_1,\hat y_1)
		= \int \bigotimes_{k=3}^d
		p^k_t(x_k,dy_k) \frac{\partial}{\partial x_2} \int p^2_t(x_2,dy_2)
		\frac{\partial  f}{\partial x_1}(x_1,\hat
		y_1).
		\label{eq:semigroup_mixed3}
	\end{equation}
	Lemma~\ref{l:continuity_semigroup2} and Lemma~\ref{l:continuity_semigroup1}
	show that~\eqref{eq:semigroup_mixed3} is continuous as a function of $x
	\in [0,1]^d$.
	Consequently, Schwarz's theorem (see, e.g.\ Theorem~9.41 in~\textcite{Rudin1976})
	implies that the partial derivative
	$[0,1]^d \ni x \mapsto \frac{\partial^2}{\partial x_1 \partial x_2}\int
	\bigotimes_{k=2}^d p^k_t(x_k,dy_k) f(x_1,\hat y_1)$ exists and satisfies
	for all $x\in[0,1]^d$ that
	\begin{equation}
		\frac{\partial^2}{\partial x_1 \partial x_2} \int
		\bigotimes_{k=2}^d p^k_t(x_k,dy_k) f(x_1,\hat y_1)
		= \frac{\partial^2}{\partial x_2 \partial x_1} \int
		\bigotimes_{k=2}^d p^k_t(x_k,dy_k) f(x_1,\hat y_1).
	\end{equation}
	In particular, for fixed $x \in [0,1]^{d}$, the
	function $z \mapsto \frac{\partial}{\partial x_2}\int \bigotimes_{k=2}^d
	p^k_t(x_k, dy_k) f(z, \hat y_1)$ is in $C^1([0,1],\R)$. From this and
	Lemma~\ref{l:semigroup_1d}, it follows that the partial
	derivative~\eqref{eq:semigroup_mixed_2nd}
	exists.
	Fubini's theorem, Lemma~\ref{l:semigroup_1d}, and the dominated convergence
	theorem further show for all $x\in [0,1]^d$ that
	\begin{equation}
		\frac{\partial^2}{\partial x_1 \partial x_2} \int
		\bigotimes_{k=1}^d p^k_t(x_k,dy_k)  f(y)
		= \int \bigotimes_{k=3}^d p^k_t(x_k,dy_k) \frac{\partial}{\partial x_1}\int p^1_t(x_1,dy_1)
		\frac{\partial}{\partial x_2}\int p^2_t(x_2,dy_2) f(y).
		\label{eq:semigroup_second_der}
	\end{equation}
	Then Lemma~\ref{l:continuity_semigroup2} and
	Lemma~\ref{l:continuity_semigroup1} imply
	that~\eqref{eq:semigroup_second_der} is continuous as a function of $x \in
	[0,1]^d$. This concludes the proof of Lemma~\ref{l:semigroup_mixed_2nd}.
\end{proof}
The proof of the following Lemma~\ref{l:semigroup_mixed_3rd} is analogous to
the proofs of Lemma~\ref{l:semigroup_partial_diag} and
Lemma~\ref{l:semigroup_mixed_2nd} above and therefore omitted here.
\begin{lemma}[Continuity of mixed third derivatives]\label{l:semigroup_mixed_3rd}
	Assume \textup{Setting~\ref{set:diffusion_ddim}} and
	let $t\in[0,\infty)$ and $f \in C^3([0,1]^d,\R)$. Then it holds for every
	$i,j,l \in \{1,\dotsc,d\}$ that the partial derivative
	\begin{equation}
		[0,1]^d \ni x \mapsto \frac{\partial^3}{\partial x_i\partial x_j\partial
		x_l}\int \bigotimes_{k=1}^d p^k_t(x_k,dy_k) f(y)
	\end{equation}
	exists and is continuous.
\end{lemma}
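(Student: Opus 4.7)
The plan is to mirror the case analysis implicit in the proofs of Lemma~\ref{l:semigroup_partial_diag} and Lemma~\ref{l:semigroup_mixed_2nd} and split according to how many of the indices $i,j,l$ coincide. If $i=j=l$ there is nothing to do, as Lemma~\ref{l:semigroup_partial_diag} already handles the pure third derivative. If exactly two of the indices agree, say $i=j=1$ and $l=2$, I would use Fubini's theorem to rewrite
\[
\int\bigotimes_{k=1}^d p^k_t(x_k,dy_k)f(y)=\int p^2_t(x_2,dy_2)\int p^1_t(x_1,dy_1)\int\bigotimes_{k=3}^d p^k_t(x_k,dy_k)f(y),
\]
then combine the approach of Lemma~\ref{l:semigroup_partial_diag} to pull the two $x_1$-derivatives inside (using Lemma~\ref{l:semigroup_1d} applied to $S^1_t$ and the fact that, for fixed $\hat x_1$, the function $z\mapsto\int\bigotimes_{k\geq 3}p^k_t(x_k,dy_k)f(z,x_2,\hat y_{12})$ lies in $C^3([0,1],\R)$) with the approach of Lemma~\ref{l:semigroup_mixed_2nd} for the remaining $x_2$-derivative.

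When $i,j,l$ are pairwise distinct, say $i=1$, $j=2$, $l=3$, I would iterate the scheme of Lemma~\ref{l:semigroup_mixed_2nd}. First, for fixed $x$, the inner integrand $z\mapsto\int\bigotimes_{k\geq 4}p^k_t(x_k,dy_k)f(x_1,x_2,z,\hat y_{123})$ lies in $C^3([0,1],\R)$ by dominated convergence and $f\in C^3$, so Lemma~\ref{l:semigroup_1d} gives a uniform bound on the $x_3$-derivative of $\int p^3_t(x_3,dy_3)(\cdot)$. Then the $x_2$-derivative can be brought inside by dominated convergence justified by the same bound, and an application of Schwarz's theorem exchanges the $x_2$ and $x_3$ derivatives. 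Finally the $x_1$-derivative is handled analogously, after one more use of Fubini, yielding the representation
\[
\frac{\partial^3}{\partial x_1\partial x_2\partial x_3}\int\bigotimes_{k=1}^dp^k_t(x_k,dy_k)f(y)=\int\bigotimes_{k\geq 4}p^k_t(x_k,dy_k)\frac{\partial}{\partial x_1}\!\int p^1_t\frac{\partial}{\partial x_2}\!\int p^2_t\frac{\partial}{\partial x_3}\!\int p^3_tf(y),
\]
and Lemma~\ref{l:continuity_semigroup2} together with Lemma~\ref{l:continuity_semigroup1} then give continuity in $x\in[0,1]^d$.

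The main obstacle is purely bookkeeping: at each step one must verify that the partial derivative genuinely commutes past the corresponding integral, which relies on the uniform resolvent/semigroup bounds from Lemma~\ref{l:semigroup_1d} combined with dominated convergence, and one must organize the order of integrations so that Schwarz's theorem can be invoked to justify the swaps of successive derivatives. No new analytic input is required beyond what already appears in Lemma~\ref{l:semigroup_mixed_2nd}, which is why the authors omit the routine but tedious iterated calculation.
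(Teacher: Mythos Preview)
Your proposal is correct and matches the paper's intended argument: the authors explicitly omit the proof, stating only that it is analogous to the proofs of Lemma~\ref{l:semigroup_partial_diag} and Lemma~\ref{l:semigroup_mixed_2nd}, and your case split (all indices equal, exactly two equal, all distinct) together with the iterated use of Fubini, Lemma~\ref{l:semigroup_1d}, dominated convergence, Schwarz's theorem, and Lemmas~\ref{l:continuity_semigroup1}--\ref{l:continuity_semigroup2} is precisely that analogy carried out.
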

\begin{lemma}[$C^m$-estimate for multidimensional diffusive part] \label{l:semigroup_diffusion}
	Assume \textup{Setting~\ref{set:diffusion_ddim}},
	let $m\in\{0,1,2,3\}$, let $t\in[0,\infty)$, and let $f \in
	C^m([0,1]^d,\R)$.
	Then it holds that $T^2_t f \in C^m([0,1]^d,\R)$ and
	\begin{equation}
		\norm{T^2_t f}_{C^m} \leq e^{\mu_m t}\norm{ f}_{C^m}.
		\label{eq:semigroup_diffusion_estimate}
	\end{equation}
\end{lemma}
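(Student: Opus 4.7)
The plan is to fix a multiindex $\alpha\in\N_0^d$ with $\abs{\alpha}\leq m$ and to show that $\sup_{x\in[0,1]^d}\abs{\partial^\alpha(T^2_tf)(x)}\leq e^{\mu_m t}\norm{f}_{C^m}$. Existence and continuity of $\partial^\alpha(T^2_tf)$ on $[0,1]^d$ for every such $\alpha$ is a direct consequence of Lemma~\ref{l:continuity_semigroup1} (for $\abs{\alpha}=0$, with $I=\emptyset$), Lemma~\ref{l:semigroup_partial_diag} (pure derivatives), Lemma~\ref{l:semigroup_mixed_2nd} (mixed second derivatives), and Lemma~\ref{l:semigroup_mixed_3rd} (mixed third derivatives); these together cover every multiindex of order at most $3$ and hence yield $T^2_tf\in C^m([0,1]^d,\R)$.

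For the quantitative estimate I would exploit the factorization $(T^2_tf)(x)=\int\bigotimes_{k=1}^d p^k_t(x_k,dy_k)f(y)$ to view $T^2_t$ as the composition of the pairwise commuting one-dimensional semigroups $S^1_t,\dotsc,S^d_t$, each acting on its own coordinate. Two commutation properties, both justified by Fubini's theorem and dominated convergence, are used throughout: the operators $S^i_t$ commute with one another, and for $j\neq i$ the derivative $\partial_{x_j}$ commutes with integration against $p^i_t(x_i,\,\cdot\,)$. The derivatives in $x_i$ that remain ``inside'' $S^i_t$ are then handled by Lemma~\ref{l:semigroup_1d}(iii) applied to the one-dimensional slice in $x_i$, which produces a factor $e^{\nu_{\alpha_i}^{(i)}t}$ (writing $\nu_k^{(i)}$ for the one-dimensional constant from Lemma~\ref{l:semigroup_1d} associated with $a_i$).

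Setting $J\defeq\{i\in\{1,\dotsc,d\}\colon\alpha_i>0\}$, I would peel off the coordinates $i\notin J$ first (each $S^i_t$ being a contraction on $C([0,1],\R)$) and then the coordinates $i\in J$ one at a time. An induction on $\abs{J}$ should deliver the pointwise bound
\begin{equation}
\abs[\big]{\partial^\alpha(T^2_tf)(x)}\leq \exp\Bigl(t\sum_{i\in J}\nu_{\alpha_i}^{(i)}\Bigr)\norm{f}_{C^m},
\end{equation}
the essential point being that after peeling off the other coordinates the resulting function, viewed as a function of $x_i$ alone with the remaining variables frozen, still has $C^{\alpha_i}$-norm bounded by $\norm{f}_{C^m}$: the contractive $S^j_t$'s and lower-order derivatives in $x_j$ for $j\neq i$ only ever involve partial derivatives of $f$ of total order at most $m$.

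To close the argument, I would observe that because $\abs{\alpha}\leq m\leq 3$ and $\nu_1=0$, at most one index $i\in J$ can carry $\alpha_i\geq 2$; a short case distinction on the shape of $\alpha$ (all nonzero components equal to $1$, or exactly one component equal to $2$ with possibly one further component equal to $1$, or a single component equal to $3$) then confirms $\sum_{i\in J}\nu_{\alpha_i}^{(i)}\leq\mu_m$ and combines with the previous display to yield $\norm{T^2_tf}_{C^m}\leq e^{\mu_m t}\norm{f}_{C^m}$. The main technical obstacle is the book-keeping in the induction: precisely justifying the interchanges of derivatives and integrations and tracking that the intermediate function's $C^{\alpha_i}$-norm in the $i$-th slot stays bounded by $\norm{f}_{C^m}$ uniformly in the frozen variables.
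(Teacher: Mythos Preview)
Your proposal is correct and follows essentially the same approach as the paper. The paper likewise obtains existence and continuity of the partial derivatives from Lemmas~\ref{l:semigroup_partial_diag}, \ref{l:semigroup_mixed_2nd}, and \ref{l:semigroup_mixed_3rd}, and derives the quantitative bound by peeling off the one-dimensional semigroups via Fubini and dominated convergence and applying Lemma~\ref{l:semigroup_1d}(iii) in each coordinate; the only cosmetic difference is that the paper writes out the relevant derivative shapes ($\partial_{x_i}^n$, $\partial_{x_i}\partial_{x_j}$, $\partial_{x_i}\partial_{x_j}^2$, $\partial_{x_i}\partial_{x_j}\partial_{x_l}$) explicitly instead of packaging them in an induction on $\abs{J}$.
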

\begin{proof}
	Existence and continuity of the partial derivatives follow from
	Lemma~\ref{l:semigroup_partial_diag}, Lemma~\ref{l:semigroup_mixed_2nd}, and
	Lemma~\ref{l:semigroup_mixed_3rd}.
	It follows from Lemma~\ref{l:semigroup_1d} and from the dominated
	convergence theorem for all $n\in\N_0$ with
	$n\leq m$ and all $x\in [0,1]^d$ that
	\begin{equation}
		\begin{split}
			\abs*{\frac{\partial^n (T^2_t f)}{\partial x_1^n}(x)}
			&= \abs*{\frac{\partial^n}{\partial x_1^n} \int
			p^1_t(x_1,dy_1) \int \bigotimes_{i=2}^d p^i_t(x_i,dy_i) f(y)}\\
			&\leq e^{\mu_n t} \adjustlimits \max_{k\in\{0,\dotsc,n\}} \sup_{z \in [0,1]}\,
			\abs*{\frac{\partial^k}{\partial z^k}\int \bigotimes_{i=2}^d
			p^i_t(x_i,dy_i) f(z,\hat y_1)}\\
			&= e^{\mu_n t} \adjustlimits \max_{k\in\{0,\dotsc,n\}} \sup_{z \in [0,1]}\,
			\abs*{\int \bigotimes_{i=2}^d
			p^i_t(x_i,dy_i)\frac{\partial^k  f}{\partial z^k}(z,\hat y_1)}\\
			&\leq e^{\mu_n t}\max_{k\in\{0,\dotsc,n\}}
			{\norm*{\frac{\partial^k  f}{\partial x_1^k}}_\infty}.
		\end{split}
	\end{equation}
	If $m \geq 2$, Lemma~\ref{l:semigroup_1d} and the dominated convergence
	theorem show for all $x\in [0,1]^d$ that
	\begin{equation}
		\begin{split}
			\abs*{\frac{\partial^2(T^2_t f)}{\partial x_1 \partial
			x_2}(x)}
			&= \abs*{\frac{\partial^2}{\partial x_1 \partial x_2} \int p^1_t(x_1,dy_1)
			\int \bigotimes_{i=2}^d p^i_t(x_i,dy_i) f(y)}\\
			&\leq \adjustlimits \max_{k\in\{0,1\}} \sup_{z_1 \in [0,1]}\,
			\abs*{\frac{\partial^k}{\partial z_1^k}\frac{\partial}{\partial
			x_2}\int \bigotimes_{i=2}^d p^i_t(x_i,dy_i)  f(z_1,\hat y_1)}\\
			&= \adjustlimits \max_{k\in\{0,1\}} \sup_{z_1 \in [0,1]}\,
			\abs*{\frac{\partial}{\partial
			x_2}\int p^2_t(x_2,dy_2) \int \bigotimes_{i=3}^d p^i_t(x_i,dy_i)
			\frac{\partial^k  f}{\partial z_1^k}(z_1,\hat y_1)}\\
			&\leq \adjustlimits \max_{k,l\in\{0,1\}} \sup_{z_1,z_2 \in [0,1]}
			\abs*{\frac{\partial^l}{\partial z_2^l}\int \bigotimes_{k=3}^d p^k_t(x_k,dy_k)
			\frac{\partial^k  f}{\partial z_1^k}(z_1,z_2, \hat y_{12})}\\
			&\leq \max_{k,l\in\{0,1\}}
			{\norm*{
			\frac{\partial^{k+l}  f}{\partial x_1^k\partial x_2^l}}_\infty}.
		\end{split}
	\end{equation}
	Similarly, if $m = 3$, it follows for all $x\in [0,1]^d$ that
	\begin{equation}
		\begin{split}
			\abs*{\frac{\partial^3 (T^2_t f)}{\partial x_1 \partial
			x_2^2}(x)}
			&\leq e^{\mu_2 t}\max_{k\in\{0,1\}, l\in\{0,1,2\}}
			{\norm*{
			\frac{\partial^{k+l} f}{\partial x_1^k\partial x_2^l}}_\infty}
		\end{split}
	\end{equation}
	and
	\begin{equation}
		\begin{split}
			\abs*{\frac{\partial^3 (T^2_t f)}{\partial x_1 \partial x_2 \partial
			x_3}(x)}
			&\leq \max_{k,l,n\in\{0,1\}}
			{\norm*{
			\frac{\partial^{k+l+n} f}{\partial x_1^k\partial x_2^l \partial
			x_3^n}}_\infty}.
		\end{split}
	\end{equation}
	All of the above estimates also hold for the partial derivatives in the
	remaining coordinate directions. Combining all of these estimates
	shows~\eqref{eq:semigroup_diffusion_estimate}. This completes
	the proof of Lemma~\ref{l:semigroup_diffusion}.
\end{proof}

\section{Main result: Spatial derivatives of semigroups} \label{sec:semigroup_estimate}
\begin{theorem}[$C^m$-estimate for semigroups of square-root diffusions] \label{thm:semigroup_together}
	Let $d\in\N$, let $(\Omega,\MCF,\Prob, (\F_t)_{t\in[0,\infty)})$ be a
	stochastic basis, let $W = (W(1),\dotsc,W(d)) \colon [0,\infty) \times \Omega
	\to \R^d$ be a standard $(\F_t)_{t\in[0,\infty)}$-Brownian motion with
	continuous sample paths, let $a_1,\dotsc,a_d \in
	C^3([0,1],\R)$ satisfy for
	all $i\in\{1,\dotsc,d\}$ and all $x\in (0,1)$ that $a_i(0) = 0 = a_i(1)$
	and $a_i(x) > 0$,
	let $b_1,\dotsc,b_d \in C^3([0,1]^d,\R)$ satisfy for all
	$i\in\{1,\dotsc,d\}$ and all $x = (x_1,\dotsc,x_d) \in [0,1]^d$ with
	$x_i \in \{0,1\}$ that $(-1)^{x_i} b_i(x) \geq 0$,
	for every $m\in\{1,2,3\}$ we define 
	$\lambda_m \defeq \max_{\alpha \in
	\N_0^d, 0 < \abs{\alpha} \leq m}\sum_{i=1}^d \norm{\partial^\alpha
	b_i}_\infty$,
	and we define $\lambda_0 \defeq 0$,
	$\mu_0 \defeq 0$, $\mu_1 \defeq 0$,
	$\mu_2 \defeq \max_{i\in\{1,\dotsc,d\}}\frac{1}{2}\norm{\frac{d^2
	a_i}{dx^2}}_\infty$, and
	$\mu_3 \defeq \max_{i\in\{1,\dotsc,d\}}(\norm{\frac{d^3 a_i}{dx^3}}_\infty +
	\frac{3}{2}\norm{\frac{d^2 a_i}{dx^2}}_\infty)$.
	Then
	\begin{enumerate}[label=\textup{(\roman*)}]
		\item there exist
			$(\F_t)_{t\in[0,\infty)}$-adapted processes
			$X^x = (X^x(1),\dotsc,X^x(d)) \colon [0,\infty)\times \Omega \to [0,1]^d$,
			$x\in[0,1]^d$, with
			continuous sample paths satisfying for all $i\in\{1,\dotsc,d\}$, all
			$t\in[0,\infty)$, and all $x = (x_1,\dotsc,x_d) \in[0,1]^d$ that $\Prob$-a.s.
			\begin{equation}\label{eq:SDE.main-result}
				X^x_t(i) = x_i + \int_0^t b_i(X^x_s) \dd s + \int_0^t
				\sqrt{a_i(X^{x}_s(i))}\dd W_s(i)
			\end{equation}
			and
		\item it holds for all $m\in\{0,1,2\}$, all $t\in[0,\infty)$, and all $f \in
			C^{m}([0,1]^d,\R)$ that the function $[0,1]^d \ni x \mapsto \Exp{f(X^x_t)}$ is an
			element of $C^m([0,1]^d,\R)$ and satisfies
			\begin{equation}
				\norm{x \mapsto \Exp{f(X^x_t)}}_{C^m} \leq e^{(m^2 \lambda_m + \mu_m)
				t}\norm{ f}_{C^m}.
			\end{equation}
	\end{enumerate}
\end{theorem}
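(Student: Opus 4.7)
The plan is to realize the semigroup $T_t$ of the SDE $X^{\cdot}$ as a Trotter product of the drift semigroup $T^1_t$ from Section~\ref{sec:drift_term} and the diffusion semigroup $T^2_t$ from Section~\ref{sec:diffusion_term}, to transfer the $C^m$-regularity from the approximants to the limit by an Arzelà--Ascoli argument, and finally to extend to general $f\in C^m([0,1]^d,\R)$ by density.

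For part~(i), existence of the solutions $X^x$ follows directly from Theorem~3.2 in~\textcite{ShigaShimizu1980}, as already used in Settings~\ref{set:semigroup_drift} and~\ref{set:diffusion_ddim}. Denote by $\{T_t\colon t\in[0,\infty)\}$ the associated strongly continuous contraction semigroup on $C([0,1]^d,\R)$ defined by $(T_tf)(x)\defeq\Exp{f(X^x_t)}$. On the core $C^2([0,1]^d,\R)$ its generator is the sum of the generators of $T^1$ and $T^2$, so the Trotter--Kato product formula (e.g., Corollary~III.5.8 in~\textcite{EthierKurtz1986}) yields for every $f\in C([0,1]^d,\R)$ and every $t\in[0,\infty)$ that
\begin{equation*}
  \lim_{n\to\infty}\norm[\big]{(T^1_{t/n}T^2_{t/n})^n f - T_t f}_\infty = 0.
\end{equation*}

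For part~(ii), fix $t\in[0,\infty)$ and first assume $f\in C^3([0,1]^d,\R)$. Iterating the estimates of Lemmas~\ref{l:semigroup_drift_C1}, \ref{l:semigroup_drift_C2}, and~\ref{l:semigroup_diffusion} along the $n$ alternating factors yields for every $m\in\{0,1,2\}$ uniformly in $n\in\N$ the bound
\begin{equation*}
  \norm[\big]{(T^1_{t/n}T^2_{t/n})^n f}_{C^m}
  \leq e^{(m^2\lambda_m+\mu_m)t}\norm{f}_{C^m};
\end{equation*}
the analogous iteration of Lemma~\ref{l:semigroup_drift_C3} together with Lemma~\ref{l:semigroup_diffusion} at order $3$ additionally gives a uniform-in-$n$ bound on $\norm{(T^1_{t/n}T^2_{t/n})^n f}_{C^3}$ by $e^{(13\lambda_3+\mu_3)t}\norm{f}_{C^3}$. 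This $C^3$-bound forces every partial derivative of order at most $2$ of $(T^1_{t/n}T^2_{t/n})^n f$ to be uniformly bounded and uniformly Lipschitz on $[0,1]^d$, so a diagonal Arzelà--Ascoli argument over the finitely many multiindices with $\abs{\alpha}\leq 2$ extracts a subsequence along which each such derivative converges uniformly. By the $C^0$-convergence to $T_tf$ from part~(i) the uniform limits must coincide with $\partial^\alpha T_tf$, which are thereby continuous, and the displayed bound persists in the limit; hence $T_tf\in C^2([0,1]^d,\R)$ and $\norm{T_tf}_{C^m}\leq e^{(m^2\lambda_m+\mu_m)t}\norm{f}_{C^m}$ for every $m\in\{0,1,2\}$.

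It remains to drop the auxiliary regularity $f\in C^3$. Given $m\in\{0,1,2\}$ and $f\in C^m([0,1]^d,\R)$, mollification (with a standard boundary extension) provides a sequence $(f_n)_{n\in\N}\subseteq C^3([0,1]^d,\R)$ with $\norm{f_n-f}_{C^m}\to 0$. Applying the just-established bound to the differences $f_n-f_k\in C^3$ shows that $(T_tf_n)_{n\in\N}$ is Cauchy in $C^m$, and its $C^m$-limit must coincide with $T_tf$ by $C^0$-continuity of $T_t$. The main obstacle is the Arzelà--Ascoli step: because the Trotter product formula only provides uniform (not $C^m$-) convergence, transferring regularity to the limit requires a uniform bound on \emph{one more} derivative than one wishes to control. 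This is precisely why the auxiliary $C^3$-estimates of Section~\ref{sec:drift_term} and Section~\ref{sec:diffusion_term} were established, even though the main theorem only claims an estimate through $m=2$.
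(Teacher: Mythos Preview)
Your proof is correct and follows essentially the same strategy as the paper's: Trotter product formula for $T_t$ in terms of $T^1$ and $T^2$, uniform $C^{m+1}$-bounds on the approximants from the lemmas of Sections~\ref{sec:drift_term} and~\ref{sec:diffusion_term}, an Arzel\`a--Ascoli compactness argument to pass to the limit in $C^m$, and then a density argument to reach general $f\in C^m$. The only organizational difference is that you carry out the compactness step once at the top level (assume $f\in C^3$, get $T_tf\in C^2$, then approximate any $f\in C^m$ by $C^3$-functions), whereas the paper does it for each $m$ separately (assume $f\in C^{m+1}$, get $T_tf\in C^m$, then approximate from $C^{m+1}$); both versions are equivalent.
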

\begin{proof}
	Theorem~3.2 in~\textcite{ShigaShimizu1980} implies (i).

	We denote by
	$\{T_t\colon t \in[0,\infty)\}$ the family of operators on $C([0,1]^d,\R)$
	that satisfy for all $t\in[0,\infty)$, all $f \in C([0,1]^d,\R)$, and all
	$x\in[0,1]^d$ that $(T_t f)(x) = \Exp{f(X^x_t)}$. Then $\{T_t\colon
	t\in[0,\infty)\}$ is the strongly continuous contraction semigroup on
	$C([0,1]^d,\R)$ associated with the diffusion process $X^{\cdot}$;
	see Remark~3.2 in~\textcite{ShigaShimizu1980}.
	Let $G\colon C^2([0,1]^d,\R) \to C([0,1]^d,\R)$ satisfy for all
	$f \in C^2([0,1]^d,\R)$ and all $x = (x_1,\dotsc,x_d) \in [0,1]^d$ that
	\begin{equation}
		(G f)(x) = \sum_{i=1}^d b_i(x) \frac{\partial  f}{\partial
		x_i}(x) + \frac{1}{2}\sum_{i=1}^d a_i(x_i)
		\frac{\partial^2  f}{\partial
		x_i^2}(x).
	\end{equation}
	Then the generator of $\{T_t \colon t \in[0,\infty)\}$ is given by the closure of
	$G$ (see, e.g., Remark~3.2 in \textcite{ShigaShimizu1980}), so
	$C^2([0,1]^d,\R)$ is a core (cf., e.g., Section I.3 in \textcite{EthierKurtz1986})
	for $G$.
	Let $\{T^1_t \colon t\in[0,\infty)\}$ be as in
	Setting~\ref{set:semigroup_drift}, let $\{T^2_t \colon t\in[0,\infty)\}$ be as in
	Setting~\ref{set:diffusion_ddim}, and let $G_1,
	G_2\colon C^2([0,1]^d,\R) \to C([0,1]^d,\R)$ satisfy for all $ f \in
	C^2([0,1]^d,\R)$ and all $x = (x_1,\dotsc,x_d) \in [0,1]^d$ that
	\begin{equation}
		(G_1 f)(x) = \sum_{i=1}^d b_i(x)\frac{\partial  f}{\partial
		x_i}(x)
	\end{equation}
	and
	\begin{equation}
		(G_2 f)(x) = \frac{1}{2}\sum_{i=1}^d
		a_i(x_i)\frac{\partial^2  f}{\partial x_i^2}(x).
	\end{equation}
	Then the closures of $G_1$ and $G_2$ are the generators of the strongly
	continuous contraction semigroups on $C([0,1]^d,\R)$ given by
	$\{T^1_t\colon t\in[0,\infty)\}$ and $\{T^2_t\colon t\in[0,\infty)\}$,
	respectively. Hence, it holds that $C^2([0,1]^d,\R)$ is a core for $G$,
	that $C^2([0,1]^d,\R)$ is a subset of the
	domains of both $G_1$ and $G_2$, and that $G = G_1 + G_2$ on
	$C^2([0,1]^d,\R)$. Therefore, it follows
	from Trotter's product formula (see, e.g., Corollary I.6.7 in
	\textcite{EthierKurtz1986})
	that the semigroup $\{T_t \colon t\in[0,\infty)\}$ satisfies for all
	$t\in[0,\infty)$ and all $ f\in C([0,1]^d,\R)$ that
	\begin{equation}
		\lim_{n\to\infty} \norm{T_t f -
		(T^1_{\nicefrac tn}T^2_{\nicefrac tn})^n f}_{\infty} = 0.
		\label{eq:trotter_product}
	\end{equation}
	By induction, it follows from Lemma~\ref{l:semigroup_drift_C1},
	Lemma~\ref{l:semigroup_drift_C2}, Lemma~\ref{l:semigroup_drift_C3}, and
	Lemma~\ref{l:semigroup_diffusion} for
	all $n \in \N$, all $m\in\{0,1,2,3\}$, all $t\in[0,\infty)$, and all $f \in
	C^{m}([0,1]^d,\R)$
	that $(T^1_{\nicefrac tn}T^2_{\nicefrac tn})^n f \in C^m([0,1]^d,\R)$ and
	\begin{equation}
		\norm{(T^1_{\nicefrac tn}T^2_{\nicefrac tn})^n  f}_{C^m}
		\leq e^{\left( (m^2 + 4\1_{\{3\}}(m))\lambda_m + \mu_m\right) t}\norm{ f}_{C^m}.
		\label{eq:trotter_product_C2}
	\end{equation}
	Equation~\eqref{eq:trotter_product_C2} shows for all $m\in\{0,1,2\}$,
	all $t\in[0,\infty)$, and
	all $ f \in C^{m+1}([0,1]^d,\R)$ that
	the sequence $\{(T^1_{\nicefrac tn}T^2_{\nicefrac tn})^n f \colon n\in\N\}$ is
	bounded in $C^{m+1}([0,1]^d,\R)$. Therefore, the Arzel\`{a}-Ascoli theorem
	guarantees for all $m\in\{0,1,2\}$, all $t\in[0,\infty)$, and all
	$ f \in C^{m+1}([0,1]^d,\R)$
	that every subsequence of $\{(T^1_{\nicefrac tn}T^2_{\nicefrac tn})^n f
	\colon n\in\N\}$ has
	a convergent subsequence in $C^m([0,1]^d,\R)$,
	whose limit is given by $T_t f$ due to~\eqref{eq:trotter_product}.
	This and~\eqref{eq:trotter_product_C2} imply for all $m\in\{0,1,2\}$, all
	$t\in[0,\infty)$, and all $ f \in C^{m+1}([0,1]^d,\R)$ that
	$T_t f \in C^m([0,1]^d,\R)$ and
	\begin{equation}
		\norm{T_t f}_{C^m} \leq e^{(m^2 \lambda_m + \mu_m) t}\norm{ f}_{C^m}.
		\label{eq:semigroup_together_2}
	\end{equation}

	For the rest of the proof, fix $m\in\{0,1,2\}$, fix $t\in[0,\infty)$, and
	fix $ f \in C^m([0,1]^d,\R)$. Since $C^{m+1}([0,1]^d,\R)$ is dense in
	$C^m([0,1]^d,\R)$, we find a sequence $\{ f_k\colon k\in\N\}
	\subseteq C^{m+1}([0,1]^d,\R)$ with the property that
	$\lim_{k\to\infty}\norm{ f -  f_k}_{C^m} = 0$. By the previous step, it
	holds for all $k\in\N$ that $T_t f_k \in C^m([0,1]^d,\R)$ and for all
	$k,l \in \N$ that
	\begin{equation}
		\norm{T_t f_k - T_t f_l}_{C^m} = \norm{T_t( f_k -
		 f_l)}_{C^m} \leq e^{(m^2 \lambda_m + \mu_m)t}
		\norm{ f_k -  f_l}_{C^m},
	\end{equation}
	which shows that $\{T_t f_k\colon k\in\N\}$ is a Cauchy sequence in
	$C^m([0,1]^d,\R)$. By completeness, it follows that $\{T_t f_k\colon
	k\in\N\}$ converges in $C^m([0,1]^d,\R)$. Moreover, since $T_t$ is a
	contraction on $C([0,1]^d,\R)$, it holds for all $k\in\N$ that
	\begin{equation}
		\norm{T_t f - T_t f_k}_{\infty} = \norm{T_t( f -  f_k)}_{\infty} \leq
		\norm{ f -  f_k}_{\infty}.
	\end{equation}
	This identifies the limit point of $\{T_t f_k \colon k\in\N\} \subseteq
	C^m([0,1]^d,\R)$ and shows that $T_t f \in C^m([0,1]^d,\R)$ and
	that $\lim_{k\to\infty}
	\norm{T_t f - T_t f_k}_{C^m} = 0$. Then it follows
	from~\eqref{eq:semigroup_together_2} that
	\begin{equation}
		\norm{T_t f}_{C^m} = \lim_{k\to\infty}\norm{T_t f_k}_{C^m} \leq
		\lim_{k\to\infty} e^{(m^2 \lambda_m + \mu_m)t}\norm{ f_k}_{C^m} = e^{(m^2 \lambda_m +
		\mu_m)t} \norm{ f}_{C^m}.
	\end{equation}
	Since $m\in\{0,1,2\}$,
	$t\in[0,\infty)$, and $ f \in C^m([0,1]^d,\R)$ were arbitrary, this
	proves (ii) and completes the proof of
	Theorem~\ref{thm:semigroup_together}.
\end{proof}

\subsubsection*{Acknowledgment}
This paper has been partially supported by the DFG Priority Program
``Probabilistic Structures in Evolution'' (SPP 1590), grant HU 1889/3-2.

\printbibliography
\end{document}